\newtheorem{thm}{Theorem}[section]
\newtheorem{f}[thm]{Fact}
\newtheorem{cor}[thm]{Corollary}
\newtheorem{lemma}[thm]{Lemma}
\newtheorem{prop}[thm]{Proposition}
\newtheorem{q}[thm]{Question}
\theoremstyle{definition}
\newtheorem{defin}[thm]{Definition}
\theoremstyle{remark}
\newtheorem{remark}[thm]{Remark}
\newtheorem{ex}[thm]{Example}
\numberwithin{equation}{section}
\newcommand{\delete}[1]{} 
\def\eps{{\varepsilon}}
\newcommand{\sk}{\vskip 0.1cm}
\newcommand{\neighbor}[2][ ]{\mathcal{N}_{#1}(#2)}
\newcommand{\ben}{\begin{enumerate}}
\newcommand{\een}{\end{enumerate}}
\newcommand{\bit}{\begin{itemize}}
\newcommand{\eit}{\end{itemize}}
\newcommand{\absmall}[2][\mathfrak{B}]{\mathrm{small}_{a, r}\left({#1}, {#2}\right)}
\newcommand{\fbsmall}[2][\mathfrak{B}]{\mathrm{small}_{g, r}\left({#1}, {#2}\right)}
\def\A {{\mathcal A}}
\def\R {{\mathbb R}}
\def\N {{\mathbb N}}
\def\Z {{\mathbb Z}}
\newcommand{\RUC}{\mathrm{RUC}_{b}}
\newcommand{\LUC}{\mathrm{LUC}_{b}}
\newcommand{\ruc}{\mathrm{ruc}}
\newcommand{\luc}{\mathrm{luc}}
\def\sign{{\mathrm sign}\,}
\def\mol{\mathrm{Mol}}
\def\A{{\mathcal{A}}}
\def\QED{\nobreak\quad\ifmmode\roman{Q.E.D.}\else{\rm Q.E.D.}\fi}
\def\BTame{\mathrm{\mathbf{[T]}}}
\def\BNP{\mathrm{\mathbf{[ASP]}}}
\def\BDLP{\mathrm{\mathbf{[WRC]}}}
\def\spn{\operatorname{Span}}
\def\tame{\operatorname{Tame}}
\def\tamefunctional{\operatorname{\mathit{Tame}}}
\def\wap{\operatorname{WAP}}
\def\wapfunctional{\operatorname{\mathit{WAP}}}
\def\asp{\operatorname{Asp}}
\def\aspfunctional{\operatorname{\mathit{Asp}}}
\newcommand{\acx}{{\rm{acx\,}}}
\newcommand{\bo}{{\rm{bal\,}}}
\begin{document}
	
\numberwithin{equation}{thm}
\title[]
{The Banach Algebra $L^{1}(G)$ and Tame Functionals} 

\sk

\author[]{Matan Komisarchik}
\address{Department of Mathematics,
	Bar-Ilan University, 52900 Ramat-Gan, Israel}
	\email{matan.komisarchik@biu.ac.il}


\date{2025, March}

\begin{abstract}
	We give an affirmative answer to a question due to M. Megrelishvili, and show that for every locally compact group $G$ we have $\tamefunctional(L^{1}(G)) = \tame(G)$, which means that a functional is tame over $L^{1}(G)$ if and only if it is tame as a function over $G$.
	In fact, it is proven that for every norm-saturated, convex vector bornology on $\RUC(G)$, being small as a function and as a functional is the same.
	This proves that $\aspfunctional(L^{1}(G)) = \asp(G)$ and reaffirms a well-known, similar result which states that $\wap(G) = \wapfunctional(L^{1}(G))$. 
\end{abstract}


\subjclass[2020]{
	43A60, 
	43A20, 
	46H05, 
	46A17, 
	54Hxx 
}  

\keywords{weakly almost periodic, functionals on Banach algebras, bornologies, Rosenthal space, tame families, Asplund space, the group algebra}

\maketitle
\setcounter{tocdepth}{1}
\tableofcontents


\section{Introduction}
Let $G$ be a locally compact group and let $\varphi \in \RUC(G)$ be a bounded, right uniformly continuous function.
We can also consider the same $\varphi$ as a functional $\varphi \in L^{\infty}(G)$ over the Banach algebra $L^{1}(G)$.
Thus, the properties of $\varphi$ can be explored in two possibly distinct ways.
Recall the following definitions of weakly almost periodic (WAP) functions/functionals:
\ben
	\item $\varphi$ is \emph{weakly almost periodic as a function} if $\varphi \cdot G \subseteq \RUC(G)$ is weakly relatively compact.
	In this case we write $\varphi \in \wap(G)$.
	\item $\varphi$ is \emph{weakly almost periodic as a functional} if the operator $\mathfrak{L}_{\varphi}\colon L^{1}(G) \to L^{\infty}(G)$ defined by $a \mapsto \varphi \circledast a$ is weakly compact. Here $\circledast$ is the induced Banach module action satisfying 
	$$
	  \langle \varphi \circledast a, b \rangle = \langle \varphi, a \star  b\rangle
	$$ 
	for every $b \in L^{1}(G)$.
	In this case we write $\varphi \in \wapfunctional(L^{1}(G))$.
\een
WAP functions and functionals have been studied extensively.
Some recent developments can be found in \cite{Filali} and its references.

Please note that both definitions of $\wap(G)$ and $\wapfunctional(L^{1}(G))$ are similar in the sense that they pose the same ``size" restriction (e.g., weak relative compactness) on different subsets of $L^{1}(G)$: $\varphi \cdot G$ for functions and $\mathfrak{L}_{\varphi}(B_{L^{1}(G)}) = \varphi \circledast B_{L^{1}(G)}$ for functionals.
In fact, it is known that ${\wap(G) = \wapfunctional(L^{1}(G))}$ \cite[Thm.~4]{Ulger}. 
It is therefore very natural to investigate the relationship between the analogous definitions for different size restrictions.

One prominent approach from the theory of dynamical systems gives rise to tame and Asplund functions and functionals. 
These notions generalize weak almost periodicity and have been of great importance in the research of function dynamics over groups and semi-groups (see, for example, \cite{GM-MTame} or \cite{MegrelBook}). 
Therefore, they can be expected to be useful in this context as well.
Indeed, in \cite{TameFunc}, Megrelishvili used a very similar approach and asked whether tame/Asplund functions are the same as tame/Asplund functionals over locally compact groups.
He managed to prove this for discrete groups but the general case remained open.

In this paper, we answer Megrelishvili's question affirmatively.
Moreover, we show that it is actually true for many other ``smallness" criteria.
Formally, we can consider a vector bornology $\mathcal{B}$ on $\RUC(G)$ and define:
	
\ben
	\item $\varphi$ is \emph{right $\mathcal{B}$-small as a function} if $\varphi \cdot G \in \mathcal{B}$ (Definition \ref{defin:B_small_function}).
	In this case we write $\varphi \in \fbsmall[\mathcal{B}]{\RUC(G)}$.
	\item $\varphi$ is \emph{right $\mathcal{B}$-small as a functional} if $\varphi \circledast B_{L^{1}(G)} \in \mathcal{B}$ (Definition \ref{defin:B_small_functional}).
	In this case we write $\varphi \in \absmall[\mathcal{B}]{\RUC(G)}$.
\een
A detailed study of tame and Asplund subsets as norm-saturated, convex vector bornologies, can be found in \cite{TameLCS}.

\newtheorem*{thm:functionfunctionalequivalence}{Theorem \ref{thm:function_functional_equivalence}}
\begin{thm:functionfunctionalequivalence}
	For every norm-saturated, convex vector bornology $\mathcal{B}$ on $\RUC(G)$ we have:
	$$
	\varphi\cdot G \in \mathcal{B} \iff \mathfrak{L}_{\varphi}(B_{\RUC(G)}) = \varphi \circledast B_{\RUC(G)} \in \mathcal{B}.
	$$
	In particular:
	\begin{align*}
		\tamefunctional(L^{1}(G)) & = \tame(G)\\
		\aspfunctional(L^{1}(G)) & = \asp(G)\\
		\wapfunctional(L^{1}(G)) & = \wap(G).
	\end{align*}
\end{thm:functionfunctionalequivalence}
\vskip 0.5cm
The paper is structured in the following way:
\begin{itemize}
	\item Sections \ref{section:preliminaries} and \ref{section:group_algebra_actions} contains preliminary material and notations.
	\item Section \ref{section:tameness_and_cotameness} is a short study of the relation between tameness and co-tameness in the dual of a Banach space.
	In fact, we show a new method for constructing independent sequences in the dual, proving that every tame weak-star compact subset of $V^{*}$ is also co-tame (Theorem~\ref{thm:tame_inversion}).
	\item Section \ref{section:small_functions_and_functionals} defines the general notion of small functions and functionals over groups and Banach algebras. 
	We explore some properties of these definitions and also show a few examples.
	Mainly, we present a classification of tame coordinate functionals on the Banach algebra $\mathcal{L}(l^{1}, l^{1})$ using the method from the previous section.
	\item In Section \ref{section:main}, we explore the relations between tame functions and functionals on $\RUC(G)$ for a locally compact $G$ and prove Theorem \ref{thm:function_functional_equivalence}.
	\item Finally, in Section \ref{section:open_questions} we present some open questions.
\end{itemize}

\section*{Acknowledgments}
The author is deeply thankful to Jan Pachl, for his very useful suggestion to apply the UEB topology \cite{pachl2012uniform}.
This new idea allowed for a complete proof of Theorem \ref{thm:function_functional_equivalence} and an affirmative answer to Megrelishvili's question.
The author would also like to express his gratitude for the opportunity to share these results for the first time at the conference Algebra, Topology and Their Interactions, 2024.
Finally, he would like to thank the referee for his detailed and thoughtful comments.

\section{Preliminaries} \label{section:preliminaries}
	All vector spaces in this paper are considered over the field $\R$ of real numbers.
	Also, we assume that all topological spaces are Tychonoff and all topological groups are locally compact.
	If $X$ is a topological space, let $C(X)$ and $C_{b}(X)$ be the spaces of continuous, and bounded continuous functions over $X$, respectively. 
	A subset $A$ of a Banach space $V$ is weakly relatively compact if its closure is weakly compact.
	Write $B_{V}$ for the unit ball of $V$.
	We will use the definitions and notations of \cite{TameLCS}.
	\subsection{Fragmented Families}
	Fragmentability, and fragmented functions had been studied in a variety of settings (multi-valued functions, uniform spaces, etc.).
	Here we show a specific (but equivalent) version which is most relevant.
	More general approaches can be found in \cite{JR,JOPV,Me-fr98}.
	\begin{defin} \label{def:fr} \cite{JOPV,Me-fr98} 
		Let $(X,\tau)$ be a topological space.
		A (not necessarily continuous) function $f\colon X \to \R$ is said to be \emph{fragmented} if for every nonempty subset $A$ of $X$ and every $\eps > 0$ there exists an open subset $O$ of $X$ such that $O \cap
		A$ is nonempty and the set $f(O \cap A)$ is $\eps$-small in $\R$.
	\end{defin}

	\begin{defin}\cite{GM1} \label{d:fr-family} \ 
		We say that a {\it family of functions} $\mathcal{F} \subseteq \R^{X}$ is {\it fragmented} if the condition of Definition
		\ref{def:fr} holds simultaneously for all $f \in \mathcal F$.
		That is, $f(O \cap A)$ is $\eps$-small for every $f \in \mathcal
		F$. 
	\end{defin} 
	\begin{defin}\label{d:AspSet} \cite[p.~22]{Fabian1997} 
		Let $A$ be a bounded subset of a Banach space $V$.
		$A$ is said to be an \emph{Asplund set} if for every countable $C \subseteq A$ the pseudometric space $(B_{V^{*}}, \rho_{C})$ is separable, where:
		$$
		  \rho_{C}(\varphi, \psi) := \sup_{x \in C} \lvert \varphi(x) - \psi(x) \rvert.
		$$
	\end{defin}
	\begin{f}
		\label{f:countDetermined} \cite[Theorem 2.6]{GM-MTame}
		Let $F$ be a
		bounded
		family of 
		real-valued continuous functions on a compact 
		space $X$.
		The following conditions are equivalent:
		\ben
		\item
		$F$ is a fragmented family of functions on $X$.
		\item
		Every \emph{countable} subfamily $C$ of $F$ is fragmented on $X$.
		\item
		For every countable subfamily $C$ of $F$,
		the pseudometric space $(X,\rho_{C})$ is separable,
		where
		$$
		\rho_{C}(x_1,x_2):=\sup_{f \in C} |f(x_1)-f(x_2)|.
		$$ 
		\een
	\end{f}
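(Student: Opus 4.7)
\medskip

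\noindent\textbf{Proof plan for Fact~\ref{f:countDetermined}.}

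The plan is to prove the cycle $(1)\Rightarrow(2)\Rightarrow(3)\Rightarrow(1)$. The implication $(1)\Rightarrow(2)$ is immediate: the fragmentation condition in Definition~\ref{d:fr-family} is inherited by every subfamily, in particular by every countable one, so nothing to do here.

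For $(2)\Rightarrow(3)$, fix a countable $C\subseteq F$ and $\eps>0$, and let me build a countable $\eps$-dense subset of $(X,\rho_C)$. Using the fragmentation of $C$ on $X$, a standard transfinite scattering argument produces an increasing chain $\{U_\alpha\}_{\alpha<\lambda}$ of open subsets of $X$ with $U_0=\emptyset$, $U_\lambda=X$, $U_\beta=\bigcup_{\alpha<\beta}U_\alpha$ at limits, and at successors $U_{\alpha+1}\setminus U_\alpha$ nonempty open in $X\setminus U_\alpha$ and such that $f(U_{\alpha+1}\setminus U_\alpha)$ is $\eps$-small for every $f\in C$. Compactness of $X$ forces $\lambda$ to be countable (each successor stage adds a nonempty relatively open piece, so at any uncountable ordinal $\lambda$ the $\tau$-complements would fail the finite intersection property). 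Picking one point in each $U_{\alpha+1}\setminus U_\alpha$ gives a countable $\eps$-net in $\rho_C$, and taking $\eps=1/n$ and uniting over $n$ yields separability.

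The heart of the argument, and the main obstacle, is $(3)\Rightarrow(1)$. I will prove the contrapositive: if $F$ is not a fragmented family, then I exhibit a countable $C\subseteq F$ for which $(X,\rho_C)$ is non-separable. Failure of fragmentation yields $\eps>0$ and a nonempty $A\subseteq X$ (which may be taken closed by passing to $\overline{A}$ and using continuity of the members of $F$) such that for every relatively open $U\subseteq A$ with $U\neq\emptyset$, there exist $f\in F$ and $x,y\in U$ with $|f(x)-f(y)|>\eps$. On this basis I will construct a Cantor scheme $\{K_s:s\in 2^{<\omega}\}$ of nonempty closed subsets of $A$ together with a countable set $\{f_s:s\in 2^{<\omega}\}\subseteq F$ such that:
\begin{itemize}
\item $K_{s\smallfrown 0}\cup K_{s\smallfrown 1}\subseteq K_s$, and
\item $f_s(K_{s\smallfrown 0})$ and $f_s(K_{s\smallfrown 1})$ lie in disjoint closed intervals of distance $>\eps/3$.
\end{itemize}
The recursion uses the failure hypothesis to find two points $x,y\in \mathrm{int}_A(K_s)$ with $|f_s(x)-f_s(y)|>\eps$ for some $f_s\in F$, and then uses continuity of $f_s$ together with compactness (regularity of $X$) to separate them by two small closed neighborhoods $K_{s\smallfrown 0}, K_{s\smallfrown 1}$ with the required $f_s$-separation. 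By compactness of $X$, for every $\sigma\in 2^{\om}$ one can pick $x_\sigma\in\bigcap_n K_{\sigma|_n}$, and any two distinct branches $\sigma\neq\tau$ first split at some node $s$, so $|f_s(x_\sigma)-f_s(x_\tau)|>\eps/3$. Setting $C:=\{f_s:s\in 2^{<\om}\}$, the uncountable family $\{x_\sigma:\sigma\in 2^{\om}\}$ is $(\eps/3)$-separated in $\rho_C$, contradicting separability and completing the proof. The subtle point is ensuring that in the recursion the separating $f_s$ can be chosen inside $F$ and that the inductive $K_s$'s stay inside the original $A$ while remaining nonempty closed with nonempty relative interior; this is precisely where continuity of the members of $F$ and the topological regularity of the compact $X$ are used.
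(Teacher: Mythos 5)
The paper itself does not prove this statement; it is imported verbatim as a Fact from \cite[Theorem~2.6]{GM-MTame}, so there is no internal proof to compare against. Judged on its own terms, your cycle $(1)\Rightarrow(2)\Rightarrow(3)\Rightarrow(1)$ is a reasonable architecture, and two of the three arrows are sound: $(1)\Rightarrow(2)$ is indeed immediate, and your Cantor-scheme contrapositive for $(3)\Rightarrow(1)$ is a correct, standard dyadic-tree argument (passing to $\overline{A}$ is legitimate because $A$ is dense in $\overline{A}$, the recursion preserves nonempty relative interiors by regularity of the compact set $\overline{A}$, and compactness of the nested closed sets $K_{\sigma|_{n}}$ supplies the branch points, which are pairwise $\rho_C$-separated by the splitting functions).

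The gap is in $(2)\Rightarrow(3)$. The assertion that ``compactness of $X$ forces $\lambda$ to be countable'' is false, and the justification offered does not work: the complements $X\setminus U_{\alpha}$ form a decreasing chain of nonempty closed sets, so by compactness they automatically satisfy the finite intersection property at every stage --- this is consistent with the chain having length $\omega_{1}$ and merely says the exhaustion has not yet terminated. Concretely, for $X=[0,\omega_{1}]$ and $C$ a single constant function, the Cantor--Bendixson scattering (peel off the isolated points at each stage) is a legitimate run of your recursion of length $\omega_{1}+1$. What actually bounds the length of such a chain is hereditary Lindel\"ofness, not compactness: a strictly increasing transfinite chain of open sets in a hereditarily Lindel\"of space must stabilize at a countable stage. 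The standard repair is to pass to the compact \emph{metrizable} image $Y=\Phi(X)\subseteq\R^{C}$ under $\Phi=(f)_{f\in C}$, observing that $(X,\rho_{C})$ is isometric (modulo its kernel) to $Y$ with the sup-metric and then running your scattering argument there; but this requires transferring fragmentability of $C$ from $X$ to $Y$ along the quotient map, which is a genuine lemma (usually proved by restricting to an irreducible surjection via Zorn's lemma) and not a formality, since $\Phi$-images of relatively open sets need not be relatively open. Alternatively one can invoke Namioka's theorem that a compact space which is $\tau$-separable and fragmented by a (lower semicontinuous) metric is separable in that metric. Without one of these ingredients the implication $(2)\Rightarrow(3)$, and hence the whole cycle, is not established.
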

	\begin{cor}\label{cor:countDetermined}
		A bounded subset of a Banach space $V$ is Asplund if and only if it is fragmented as a family of functions over $B_{V^{*}}$ furnished with the weak-star topology.
	\end{cor}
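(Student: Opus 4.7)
The plan is to reduce the statement directly to Fact \ref{f:countDetermined} by viewing $A \subseteq V$ as a family of real-valued functions on the compact space $K := B_{V^{*}}$ equipped with the weak-$*$ topology. Each $x \in A$ acts on $K$ via $\hat{x}(\varphi) := \varphi(x)$, and this evaluation map is weak-$*$ continuous. Since $A$ is norm-bounded in $V$, the corresponding family $\hat{A} \subseteq C(K)$ is a uniformly bounded family of continuous real-valued functions on the weak-$*$ compact space $K$. Thus the hypotheses of Fact \ref{f:countDetermined} are met.

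Now I would identify the three equivalent conditions of Fact \ref{f:countDetermined} for $F = \hat{A}$ and $X = K$ with the relevant notions. Condition (1) says that $\hat{A}$ is a fragmented family on $B_{V^{*}}$, which is exactly the assertion that $A$ is fragmented as a family of functions over $B_{V^{*}}$. Condition (3) says that for every countable subfamily $C \subseteq \hat{A}$, the pseudometric $\rho_{C}(\varphi,\psi) = \sup_{\hat{x} \in C} |\hat{x}(\varphi)-\hat{x}(\psi)| = \sup_{x \in C} |\varphi(x)-\psi(x)|$ renders $B_{V^{*}}$ separable. This is precisely the defining property of an Asplund set in Definition \ref{d:AspSet} (after identifying $C \subseteq \hat{A}$ with its preimage in $A$).

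Therefore, the equivalence (1) $\Leftrightarrow$ (3) of Fact \ref{f:countDetermined} translates directly into the equivalence claimed in the corollary. The only things to check beyond invoking the fact are the trivial identifications just made, together with the standard observations that $B_{V^{*}}$ is weak-$*$ compact (Banach--Alaoglu) and that elements of $V$ act as weak-$*$ continuous real-valued functions on it; I foresee no real obstacle, as this is essentially a dictionary translation rather than a substantive argument.
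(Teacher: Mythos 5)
Your proof is correct and matches the paper's intended argument exactly: the corollary is stated immediately after Fact \ref{f:countDetermined} precisely because it follows from the equivalence $(1)\Leftrightarrow(3)$ there, applied to the evaluation family $\hat{A}\subseteq C(B_{V^{*}})$ on the weak-star compact ball, with condition (3) matching Definition \ref{d:AspSet} verbatim. Nothing is missing.
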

	Recall that a Banach space $V$ is \emph{Asplund} if the dual of every separable closed subspace is separable.
	\vskip 0.01cm
	\begin{f} \label{f:Asp-charact} \rm{(Namioka--Phelps \cite{NP})} 
		A Banach space $(V,||\cdot||)$ is Asplund if and only if every bounded weak-star compact subset $K \subset E^*$ is 
		(weak$^*$,norm)-fragmented.  
	\end{f}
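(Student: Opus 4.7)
The plan is to deduce this fact from Fact \ref{f:countDetermined} and Corollary \ref{cor:countDetermined} by translating the (weak$^*$, norm)-fragmentation condition into a statement about a natural family of continuous functions on a weak$^*$-compact space.

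First I would observe that a bounded $K \subseteq V^*$ is (weak$^*$, norm)-fragmented if and only if $B_V$, regarded as a family of weak$^*$-continuous real-valued functions on $(K, w^*)$, is a fragmented family in the sense of Definition \ref{d:fr-family}. Indeed, for any $\varphi, \psi \in V^*$ one has $\|\varphi - \psi\| = \sup_{v \in B_V} |v(\varphi) - v(\psi)|$, so the norm-diameter of $O \cap A$ equals the supremum, over $v \in B_V$, of the diameter of $v(O \cap A) \subseteq \R$. Since every bounded weak$^*$-compact $K$ lies in $r \cdot B_{V^*}$ for some $r > 0$, the condition for all such $K$ reduces, via restriction to subsets and rescaling, to fragmentation on the single set $B_{V^*}$.

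Next, since $B_V$ is a bounded family of continuous real-valued functions on the compact space $(B_{V^*}, w^*)$, Fact \ref{f:countDetermined} applies and yields: $B_V$ is fragmented on $B_{V^*}$ if and only if for every countable $C \subseteq B_V$ the pseudometric space $(B_{V^*}, \rho_C)$ is separable. By Corollary \ref{cor:countDetermined}, this is precisely the statement that $B_V$ is an Asplund set in the sense of Definition \ref{d:AspSet}.

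The final step is to identify this Asplund-set property of $B_V$ with the classical Asplund property of $V$. Given a countable $C = \{v_n\} \subseteq B_V$, set $W := \overline{\mathrm{span}}\{v_n\}$, a separable closed subspace, and consider the restriction $r\colon V^* \to W^*$. By Hahn--Banach, $r(B_{V^*}) = B_{W^*}$, and $\|r(\varphi) - r(\psi)\|_{W^*} = \sup_{w \in B_W} |(\varphi-\psi)(w)| \geq \rho_C(\varphi, \psi)$, with equality whenever $C$ is norm-dense in $B_W$. Hence separability of $W^*$ forces $(B_{V^*}, \rho_C)$ to be separable; conversely, specializing $C$ to a norm-dense countable subset of $B_W$ for an arbitrary separable closed $W$ yields the reverse implication. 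I expect the main obstacle to be precisely this density argument: an arbitrary countable $C \subseteq B_V$ induces a pseudometric weaker than the pull-back of the $W^*$-norm, so one must exploit the freedom to choose $C$ dense in $B_W$ to match the two.
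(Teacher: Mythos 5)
The paper does not prove this statement at all: it is quoted as a known theorem of Namioka--Phelps with a citation to \cite{NP}, so there is no in-paper argument to compare yours against. On its own terms, your proof is correct and is a reasonable self-contained derivation from the other tools the paper records. The chain you set up is sound: (w$^*$,norm)-fragmentability of a bounded set $K\subseteq V^*$ is indeed equivalent to $B_V$ being a fragmented family of (w$^*$-continuous) functions on $K$, because the norm-diameter of $O\cap A$ is exactly $\sup_{v\in B_V}\diam\, v(O\cap A)$; the reduction of ``all bounded w$^*$-compact $K$'' to the single set $B_{V^*}$ via heredity of fragmentability and Alaoglu is fine; Fact \ref{f:countDetermined} and Corollary \ref{cor:countDetermined} then convert this to ``$B_V$ is an Asplund set'' in the sense of Definition \ref{d:AspSet}; and your final Hahn--Banach step correctly identifies that property with the classical Asplund property, with the right care taken in both directions (for an arbitrary countable $C$ the pseudometric $\rho_C$ is dominated by the pull-back of the $\overline{\spn}(C)$-dual norm, and for an arbitrary separable $W$ one recovers equality by choosing $C$ dense in $B_W$). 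The only thing worth flagging is that you are leaning on Fact \ref{f:countDetermined}, which is itself a nontrivial imported result, so your argument is a derivation of Namioka--Phelps from that characterization rather than a from-scratch proof; that is perfectly legitimate given how the paper organizes its preliminaries.
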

	\subsection{Tame Families}
	A sequence of real functions  ${\{f_n\colon X \to \R\}_{n \in \N}}$  on a set
	$X$ is said to be 
	(combinatorially)  
	\emph{independent} (see \cite{Ros0,Tal}) if
	there exist real numbers $a < b$ (\emph{bounds of independence}) such that
	$$
	\bigcap_{n \in P} f_n^{-1}\left((-\infty,a]\right) \cap  \bigcap_{n \in M} f_n^{-1}\left([b,\infty)\right) \neq \emptyset
	$$
	for all finite disjoint subsets $P, M$ of $\N$. 
	
	\begin{defin} \label{d:TameFamily} \cite{GM-tame,GM-MTame}
		A bounded family $F$ of 
		real-valued (not necessarily continuous)  functions on a set $X$ is a {\it tame family} if $F$ does not contain an independent sequence. 
	\end{defin}

	The following fact
	can be easily derived using the finite intersection property characterization of compactness (see, for example, \cite{Ko}). 
	\begin{f} \label{f:independence_over_compact_sets} 
		Suppose that $\{f_{n}\}_{n \in \N}$ is 
		an independent 
		family of continuous functions over a compact $X$.
		Then there are $a < b \in \R$ such that for every disjoint, possibly infinite $P, M \subseteq \N$:
		$$
		\bigcap_{n \in P} f_n^{-1}\left((-\infty,a]\right) \cap  \bigcap_{n \in M} f_n^{-1}\left([b,\infty)\right) \neq \emptyset.
		$$
	\end{f}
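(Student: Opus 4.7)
The plan is to exploit the gap between strict and non-strict inequalities. By the independence hypothesis there exist bounds $a_0 < b_0$ such that for every finite disjoint pair $P, M \subseteq \N$ the open-set intersection
$$
\bigcap_{n \in P} f_n^{-1}(-\infty, a_0) \cap \bigcap_{n \in M} f_n^{-1}(b_0, \infty)
$$
is nonempty. Open sets behave badly under arbitrary intersection on compact spaces, so my first move is to pass to the closed sets $C_n^{-} := f_n^{-1}((-\infty, a_0])$ and $C_n^{+} := f_n^{-1}([b_0, \infty))$, which are closed because each $f_n$ is continuous. Each $C_n^{-}$ contains $f_n^{-1}(-\infty, a_0)$ and each $C_n^{+}$ contains $f_n^{-1}(b_0, \infty)$, so the finite intersection property transfers immediately from the open sets to these closed ones.

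Next I would invoke compactness: for any (possibly infinite) disjoint $P, M \subseteq \N$, the family $\{C_n^{-}\}_{n \in P} \cup \{C_n^{+}\}_{n \in M}$ is a family of closed subsets of the compact space $X$ with the finite intersection property. Compactness then yields some $x \in X$ lying in the full intersection, i.e., $f_n(x) \leq a_0$ for all $n \in P$ and $f_n(x) \geq b_0$ for all $n \in M$.

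Finally, I would pick new bounds $a, b$ with $a_0 < a < b < b_0$, which is possible because $a_0 < b_0$. For these choices $f_n(x) \leq a_0$ implies $f_n(x) < a$ and $f_n(x) \geq b_0$ implies $f_n(x) > b$, so $x$ sits in the desired open-set intersection with the tighter bounds $a < b$. There is no genuine obstacle; the only thing worth being careful about is that the slack created by tightening $(a_0, b_0)$ to $(a, b)$ is precisely what converts the closed-set intersection produced by compactness back into the strict-inequality intersection demanded by the statement.
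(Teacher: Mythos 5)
Your proof is correct and follows exactly the route the paper indicates (it gives no written proof, only the remark that the fact "can easily be derived using the finite intersection property characterization of compactness"): pass from the strict-inequality preimages to the closed preimages at the original bounds $a_0,b_0$, apply the finite intersection property of closed sets in the compact space $X$, and then shrink to new bounds $a_0<a<b<b_0$ to recover the strict inequalities. No gaps.
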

	\begin{f}\cite[Lemma~6.4.3]{GM-MTame}
		\label{fact:tame_over_closure}
		Let $\{f_{n}\}_{n \in \N}$ be a bounded sequence of continuous functions on a topological space $X$. 
		Let $Y$ be a dense subset of $X$.
		Then $\{f_{n}\}_{n \in \N}$ is an independent sequence on $X$ if and only if the sequence of restrictions $\{f_{n}\mid_{Y}\}_{n \in \N}$ is an independent sequence on $Y$.
	\end{f}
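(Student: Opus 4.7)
The plan is to prove both directions directly from the definition of independence, with the only nontrivial ingredient being the continuity of the $f_n$. No appeal to the other facts in the excerpt is needed.

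The reverse direction is immediate: if $\{f_n|_Y\}$ is independent on $Y$ with some bounds $a<b$, then any witness $y\in Y$ for a pair of disjoint finite $P,M\subset \N$ also lies in $X$ and satisfies the same strict inequalities $f_n(y)<a$ for $n\in P$ and $f_n(y)>b$ for $n\in M$. So $\{f_n\}$ is independent on $X$ with the same bounds.

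For the forward direction, I would fix bounds $a<b$ witnessing independence of $\{f_n\}$ on $X$, pick arbitrary disjoint finite $P,M \subset \N$, and consider the set
$$
U_{P,M} \;:=\; \bigcap_{n\in P} f_n^{-1}(-\infty,a)\;\cap\;\bigcap_{n\in M} f_n^{-1}(b,\infty).
$$
The key observation is that since each $f_n$ is continuous and the defining inequalities are strict, each preimage $f_n^{-1}(-\infty,a)$ and $f_n^{-1}(b,\infty)$ is open in $X$. As a finite intersection of open sets, $U_{P,M}$ is therefore open in $X$. Independence of $\{f_n\}$ on $X$ gives $U_{P,M}\neq\emptyset$, and then density of $Y$ in $X$ immediately yields $U_{P,M}\cap Y\neq\emptyset$. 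Any point $y\in U_{P,M}\cap Y$ witnesses the independence condition for the restrictions $\{f_n|_Y\}$ on $Y$ with the same bounds $a<b$.

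There is really no substantial obstacle: the whole argument rests on the fact that strict inequalities produce open preimages under continuous maps, so the finite intersection defining a witness region is automatically open, and density then transfers non-emptiness from $X$ to $Y$ verbatim without any need to shrink the bounds or to pass to closures.
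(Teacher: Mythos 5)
Your proof is correct: the paper states this as a Fact cited from \cite[Lemma~6.4.3]{GM-MTame} without reproducing a proof, and your argument (open witness sets from strict inequalities plus density for the forward direction, trivial restriction of witnesses for the converse) is exactly the standard argument behind that lemma. Nothing is missing.
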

	As in \cite{GM-rose,GM-fpt,GM-survey}, we say that a Banach space $V$ is \textit{Rosenthal} if any bounded sequence contains a weak Cauchy subsequence,
	or equivalently, if $V$ does not contain an isomorphic copy of $l^1$. 
	Such Banach spaces appear in many publications (especially, after Rosenthal's classical work \cite{Ros0}),
	usually without any special name. 
	\begin{defin}
		Let $V$ be a Banach space:
		\ben
			\item A bounded subset $A \subseteq V$ is said to be \emph{tame} if it is tame as a family of functions over $B_{V^{*}}$.
			\item A weak-star compact subset $M \subseteq V^{*}$ is said to be \emph{co-tame} if every bounded $A \subseteq V$ is tame as a family of functions over $M$.
		\een
	\end{defin}

	The following fact is a nontrivial reformulation of some known results of Rosenthal.
	\begin{f} \cite[Example~2.5.1]{GM-MTame}
		Let $V$ be a Banach space, then the following conditions are equivalent:
		\ben
			\item $V$ is a Rosenthal Banach space.
			\item The unit ball $B_{V}$ of $V$ is tame as a family of functions over the weak-star compact unit ball $B_{V^{*}}$.
			In other words, $B_{V}$ is tame as a subset of $V$.
		\een
	\end{f}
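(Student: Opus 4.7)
The plan is to derive this equivalence from Rosenthal's classical $\ell^{1}$ dichotomy: every bounded sequence in a Banach space either has a weak Cauchy subsequence or a subsequence equivalent to the unit vector basis of $\ell^{1}$. Thus $V$ is Rosenthal precisely when no bounded sequence in $V$ has a subsequence equivalent to the $\ell^{1}$ basis. The whole proof therefore reduces to establishing the following equivalence for a bounded sequence $\{x_{n}\}\subseteq V$, viewed as a sequence of real-valued weak-star continuous functions on $K:=B_{V^{*}}$:
\[
\{x_{n}\}\ \text{has an independent subsequence on}\ K \iff \{x_{n}\}\ \text{has a subsequence equivalent to the}\ \ell^{1}\ \text{basis.}
\]

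For the implication $(1)\Rightarrow(2)$, I would argue by contrapositive. Assume $B_{V}$ is not tame on $B_{V^{*}}$; then some sequence $\{x_{n}\}\subseteq B_{V}$ is independent on $B_{V^{*}}$ with some bounds $a<b$. For every pair of disjoint finite sets $P,M\subseteq\N$ there is $\varphi\in B_{V^{*}}$ with $\varphi(x_{n})<a$ for $n\in P$ and $\varphi(x_{n})>b$ for $n\in M$. Given scalars $(c_{n})_{n\le N}$, split the indices into the positive and negative parts of $(c_{n})$ and apply independence to extract a constant $\delta:=\tfrac{b-a}{2}>0$ such that $\|\sum c_{n}x_{n}\|\ge \delta\sum|c_{n}|$; combined with the trivial bound $\|\sum c_{n} x_{n}\|\le\sum|c_{n}|$, this shows $\{x_{n}\}$ is equivalent to the $\ell^{1}$ basis, contradicting the Rosenthal hypothesis.

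For $(2)\Rightarrow(1)$, assume $B_{V}$ is tame and take any bounded sequence in $V$; rescaling puts it inside $B_{V}$. By Rosenthal's $\ell^{1}$ theorem it suffices to rule out subsequences equivalent to the $\ell^{1}$ basis. If $\{x_{n_{k}}\}$ were such a subsequence, say $\|\sum c_{k}x_{n_{k}}\|\ge c\sum|c_{k}|$ for some $c>0$, then for any disjoint finite $P,M\subseteq\N$ take signs $\varepsilon_{k}=-1$ on $P$ and $\varepsilon_{k}=+1$ on $M$. Using Hahn--Banach, pick $\varphi\in B_{V^{*}}$ normalizing $\sum\varepsilon_{k}x_{n_{k}}$; an averaging/Ramsey type passage to a further subsequence (standard in the $\ell^{1}$ setting) yields bounds $a<b$ witnessing independence of the subsequence on $B_{V^{*}}$, contradicting tameness. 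Hence no $\ell^{1}$-subsequence exists and $V$ is Rosenthal.

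The main obstacle is the Hahn--Banach/averaging step in $(2)\Rightarrow(1)$: getting uniform bounds of independence $a<b$ from $\ell^{1}$-equivalence requires passing to a subsequence so that the separating functionals can be chosen to respect both $P$ and $M$ simultaneously, rather than just giving large absolute values of $\varphi(\sum\varepsilon_{k}x_{n_{k}})$. This is the content of Rosenthal's original combinatorial lemma, and I would invoke it directly rather than reprove it. Everything else is a straightforward translation between the definitions of tame family and of a sequence equivalent to the $\ell^{1}$ basis.
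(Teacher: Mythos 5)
The paper does not prove this statement at all: it is stated as a Fact and cited verbatim from \cite[Example~2.5.1]{GM-MTame}, with the remark that it is ``a nontrivial reformulation of some known results of Rosenthal.'' Your reconstruction via Rosenthal's $\ell^{1}$-dichotomy is the standard route and is essentially correct in outline, so there is no conflict with the paper. Two points deserve tightening. First, in the independence~$\Rightarrow$~$\ell^{1}$ step, a single functional $\varphi$ does not give the lower bound $\tfrac{b-a}{2}\sum|c_{n}|$ unless $a<0<b$; you need the usual two-functional trick (choose $\varphi$ and $\psi$ with the roles of $P$ and $M$ swapped and estimate $\sum c_{n}\varphi(x_{n})-\sum c_{n}\psi(x_{n})\ge (b-a)\sum|c_{n}|$, then take the larger of the two). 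Second, your $(2)\Rightarrow(1)$ takes an unnecessarily indirect detour through $\ell^{1}$-equivalence and then back to independence via an ``averaging/Ramsey'' step that you do not actually carry out. The cleaner argument applies Rosenthal's dichotomy in its function-sequence form directly: a uniformly bounded sequence of functions on a set either has an independent subsequence or a pointwise convergent subsequence. If $B_{V}$ is tame on $B_{V^{*}}$, any bounded sequence therefore has a subsequence converging pointwise on $B_{V^{*}}$, hence on all of $V^{*}$, which is precisely a weak Cauchy subsequence; this yields $(2)\Rightarrow(1)$ in one step and makes the combinatorial lemma you were worried about unnecessary.
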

	The following is a consequence of \cite[Prop.~4.19]{GM-rose} and Fact \ref{fact:tame_over_closure}.
	\begin{prop} \label{fact:tame_as_banach_subset_equivalence}
		A bounded subset $A \subseteq C_{b}(X)$ is tame over $X$ if and only if it is tame as a subset of $C_{b}(X)$.
	\end{prop}
	\begin{proof}
		Write $V := C_{b}(X)$ and recall that we only consider Tychonoff spaces.
		It is well known that $X$ is embedded in $B_{V^{*}}$ via the natural map $J\colon X \to B_{V^{*}}$ defined by $(J(x))(f) := f(x)$.
		Thus, if $A \subseteq C_{b}(X)$ is tame as a subset of $V$, it is necessarily tame over $X$.
		
		To see the converse, consider the Stone-\u{C}ech compactification $i_{X}\colon X \to \beta X$. Since $X$ is Tychonoff, $i_{X}$ is a topological embedding \cite[p.~152]{kelley}.
		Write $\rho\colon C(\beta X) \to C_{b}(X)$ for the restriction map defined by $\rho(f) := f \circ i_{X}$.
		Also, for every $f \in C_{b}(X)$, write $\beta f \in C(\beta X)$ for its unique extension.
		Finally, let $W := C(\beta X)$.
		
		By contradiction, assume that $A$ is tame over $X$, but not tame as a subset of $V$.
		Thus, we can find $\{f_{n}\}_{n \in \N} \subseteq A$ which is independent over $B_{V^{*}}$.
		In virtue of Fact \ref{fact:tame_over_closure}, $\beta A := \{\beta f \mid f \in A\}$ is tame over $\beta X$.
		Using \cite[Prop.~4.19]{GM-rose}, we conclude that $\beta A$ is tame as a subset of $W$.
		Now, consider $\Phi\colon V^{*} \to W^{*}$ defined as 
		$$
		  \forall \varphi \in V^{*}, g \in W:  (\Phi(\varphi))(g) := \varphi(\rho(g)).
		$$
		It is easy to see that $\Phi(B_{V^{*}}) \subseteq B_{W^{*}}$.
		Also for every $\varphi \in V^{*}$ and $f \in V$ we have:
		$$
		  (\Phi(\varphi))(\beta f) := \varphi(\rho(\beta f)) = \varphi(f).
		$$
		As a consequence, if $\{f_{n}\}_{n \in \N}$ is independent over $B_{V^{*}}$, then $\{\beta f_{n} \}_{n \in \N}$ is independent over ${\Phi(B_{V^{*}}) \subseteq B_{W^{*}}}$.
		However, this is impossible since $\{\beta f_{n} \}_{n \in \N} \subseteq \beta A$ which is tame over $B_{W^{*}}$.
	\end{proof}

	\subsection{Topological Groups} \label{subsection:topological_groups}
	\begin{defin} \cite[Definition~4.11]{AbstractHarmonic}
		Let $G$ be a topological group and $U \subseteq G$ a neighborhood of $e \in G$.
		Let us write:
		$$
		U_{L} := \{(x, y) \in G \times G \mid x^{-1}y \in U\},
		$$
		$$
		U_{R} := \{(x, y) \in G \times G \mid xy^{-1} \in U\}.
		$$	
		$\{U_{L}\}_{U \in \neighbor{e}}$ and $\{U_{R}\}_{U \in \neighbor{e}}$ are bases of the left ($\mu_{G}^{l}$) and right ($\mu_{G}^{r}$) uniformities, respectively.
	\end{defin}
	
	\subsection{Group Actions} \label{subsection:group_actions}
	Let $G$ be a group acting from the left (resp. right) on a set $X$.
	For every $x \in X$, write $O_{x}\colon G \to X$ for the orbit map defined by 
	$$
	O_{x}(g) := g \cdot x \text{ (resp. } O_{x}(g) := x \cdot g \text{ )}.
	$$
	Also, there is a natural induced \emph{right (resp. left)} action on $C_{b}(X)$ defined as:
	$$
	(\varphi \cdot g)(x) := \varphi(g \cdot x) \text{ (resp. } (g \cdot \varphi)(x) := \varphi(x \cdot g) \text{ )},
	$$
	for every $\varphi \in C_{b}(X)$.
	Any group has a natural left and right action on itself.
	In this case, we will write $O_{g}^{l}$/$O_{g}^{r}$ for the left/right orbit maps, respectively.
	
	\begin{defin}\cite[p.~275]{AbstractHarmonic} 
		Let $G$ be a topological group.
		A function $\varphi\in C_{b}(G)$ is said to be \emph{right (resp. left) uniformly continuous} if $\varphi$ is uniformly continuous with respect to $\mu_{G}^{r}$ (resp. $\mu_{G}^{l}$).
		In this case, we write $\varphi \in \RUC(G)$ (resp. $\varphi \in \LUC(G)$).
	\end{defin}

	\begin{defin}\cite{Ulger}\label{defin:ulger_wap_function}
		A bounded function $\varphi \in C_{b}(G)$ is said to be \emph{weakly almost periodic (WAP)} if $\varphi \cdot G$ is weakly relatively compact.	
		We will write $\wap(G)$ for the family of all WAP functions over $G$.
	\end{defin}
	It is known that every WAP function is also right uniformly continuous (see for example \cite[Thm.~2]{Ulger}), so we can always assume that $\varphi \in \RUC(G)$.
	
	\begin{defin}\cite[Def.~3.1]{GM-MTame}
		A function $\varphi \in \RUC(G)$ is said to be \emph{tame} if $\varphi \cdot G$ is tame over $G$.
		
		We will write $\tame(G)$ for the family of all tame functions over $G$.
	\end{defin}

	The case of Asplund functions needs to be stated more carefully.
	Suppose that $K$ is compact, $D \subseteq K$ is dense and $F \subseteq C(K)$ is a bounded family.
	As we have established, $F$ is tame over $D$ iff $F$ is tame over $K$ iff $F$ is tame over $B_{C(K)^{*}}$.
	In their original definition (\ref{d:AspSet}), Asplund sets are only defined inside Banach spaces.
	Using Fact \ref{f:countDetermined}, we can equivalently require $F$ to be fragmented over $K$.
	However, this approach is only valid for compact spaces (as far as the author is aware).
	In order to use this technique for general groups, we will use $G$-compactifications.
	
	Suppose that $G$ is a topological group that acts from the left on the topological spaces $X, Y$.
	It is said that $Y$ is a \emph{$G$-compactification} of $X$, if $Y$ is compact, there exists a continuous $\rho\colon X \to Y$ such that $\rho(X)$ is dense in $Y$, and:
	$$
	  \forall x \in X, g \in G: \rho(g \cdot x) = g \cdot \rho(x).
	$$
	
	\begin{defin} \label{defin:asplund_function}
		A function $\varphi \in \RUC(G)$ is \emph{Asplund} if one of the following equivalent conditions hold (Corollary \ref{cor:countDetermined}):		
		\ben
		\item $\varphi \cdot G$ is an Asplund subset of $\RUC(G)$ \cite[Def.~7.9]{Me-Frag04}.
		\item $\varphi \cdot G$ is fragmented over every $G$-compactification of $G$ \cite[Def.~7.1]{GM1}.
		\een
		
		We will write $\asp(G)$ for the family of all Asplund functions over $G$.
	\end{defin}

	\subsection{Vector Bornologies}
	\begin{defin}\cite[Def.~1:1'1-3]{bornologies}
		A \emph{bornology} $\mathcal{B}$ on a vector space $V$ is a family of subsets in $V$ which covers $V$, and is hereditary 
		under inclusion (i.e., if $A \in \mathcal{B}$ and $B \subseteq A$ then $B \in \mathcal{B}$) and finite unions.
		
		We will say that $\mathcal{B}$ is a \emph{vector bornology} if for every $A, B \in \mathcal{B}$, $\alpha \in \R$:
		\ben
			\item $A + \alpha B \in \mathcal{B}$.
			\item $\bo A := \{ \beta x \mid x\in A, \beta \in [-1, 1]\} \in \mathcal{B}$.
		\een
		Write $\acx A$ for the \emph{absolutely convex hull of $A$}, meaning that:
		$$
		  \acx A := \left\{ \sum_{n=1}^{N} \alpha_{n} x_{n} \mid \alpha_{n} \in [-1, 1], x_{n} \in A, \sum_{n=1}^{N} \left\lvert\alpha_{n}\right\rvert \leq 1  \right\}.
		$$
		If $\mathcal{B}$ is also closed for convex hulls, it is said to be a \emph{convex vector bornology}.
		A bornology $\mathcal{B}$ is said to be \emph{saturated} with respect to some topology $\tau$ if whenever $A \in \mathcal{B}$, then $\overline{A}^{\tau} \in \mathcal{B}$.
	\end{defin}
	\begin{f}\cite[Prop.~3.2 and Lemma~3.5]{TameLCS} \label{fact:bornologies}
		Let $V$ be a Banach space. 
		The weakly relatively compact, Asplund and tame subsets of $V$ are norm-saturated, convex bornologies over $V$.
		We will denote them by $\BDLP^{V}, \BNP^{V}$ and $\BTame^{V}$, respectively.
		In cases where no confusion can arise, we will simply write $\BDLP, \BNP$ and $\BTame$.
	\end{f}
	\begin{remark}
		In \cite{TameLCS}, the authors do not consider weakly relative compact subsets but rather DLP subsets of locally convex spaces.
		In virtue of \cite[Thm.~17.12]{KellyNamioka}, both notions are equivalent in the case of Banach spaces, so we will implicitly apply their results to $\BDLP$.
		Similarly, they do not consider Asplund sets directly, but rather NP subsets, which are also equivalent in Banach spaces.
	\end{remark}

	\subsection{Actions of Banach Algebras on Their Dual}
	\label{subsection:banach_algebra_action_on_dual}
	Suppose that $\A$ is a Banach algebra.
	As in \cite{Arens}, the dual $\A^{*}$ can be furnished with an $\A$-bimodule structure via:
	\begin{align*}
		\forall a, b \in \A, \varphi \in \A^{*}: (a \circledast \varphi)(b) & := \varphi(ba)\\
		\forall a, b \in \A, \varphi \in \A^{*}: (\varphi \circledast a)(b) & := \varphi(ab).
	\end{align*}
	The use of a the $\circledast$ notation will be convenient later to distinguish between multiple types of actions in the case of the group algebra.

	\begin{defin} \cite[Section~3]{LauFunctionals}
		Let $\A$ be a Banach algebra.
		Then
		$$
		  \luc(\A) := \overline{\spn}(\A \circledast \A^{*}),
		$$
		$$
		  \ruc(\A) := \overline{\spn}(\A^{*} \circledast \A).
		$$
	\end{defin}
	Note that in \cite[p.~73]{Yassin}, the definitions are reversed.
	
	\begin{f} \cite[p.~198]{LauFunctionals}
		If $G$ is a locally compact group, then:
		$$
		  \LUC(G) = \luc(L^{1}(G)),\ \RUC(G) = \ruc(L^{1}(G)).
		$$
	\end{f}
	\begin{defin}\cite[Definition.~15.1]{ApproximatedIdentities}
		Let $\A$ be a Banach algebra and $M$ be an $\A$-module with the operation $\circledast$.
		$M$ is said to be \emph{essential} if $\A \circledast M = M$.
	\end{defin}

	\begin{f} \cite[Cor.~15.3]{ApproximatedIdentities} \label{fact:essential_is_module_identity}
		Let $\A$ be a Banach algebra with a bounded left approximate identity $\{e_{\lambda}\}_{\lambda \in \Lambda}$.
		Then a left Banach $\A$-module $M$ is essential if and only if for every $x \in M$ we have $\lim\limits_{\lambda \in \Lambda} e_{\lambda} \circledast x = x$.
	\end{f}
	\begin{f} \cite[Thm.~13.4]{ApproximatedIdentities} \label{fact:L1G_has_approximate_identity}
		The group algebra $L^{1}(G)$ has a two-sided approximate identity bounded by $1$.
	\end{f}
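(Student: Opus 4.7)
The plan is to construct an explicit bounded approximate identity out of normalized indicators of small neighborhoods of the identity, and to reduce norm-convergence to the compactly supported case by a density argument.

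More precisely, let $\Lambda$ be the directed set of symmetric, open, relatively compact neighborhoods $U$ of $e \in G$, ordered by reverse inclusion. For each $U \in \Lambda$, I would set
$$
e_{U} \;:=\; \mu(U)^{-1}\,\mathbf{1}_{U}\in L^{1}(G),
$$
so that $e_{U}\ge 0$ and $\|e_{U}\|_{1}=1$, giving the claimed bound by $1$. To establish the two-sided approximate identity property I need to show $e_{U}\star a \to a$ and $a\star e_{U}\to a$ in $L^{1}$-norm for every $a\in L^{1}(G)$. Since $\|e_{U}\star a\|_{1},\|a\star e_{U}\|_{1}\le \|a\|_{1}$ by Young's inequality, it suffices by a standard $\varepsilon/3$ argument to verify both convergences on the dense subset $C_{c}(G)\subseteq L^{1}(G)$.

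For $a\in C_{c}(G)$, computing
$$
(e_{U}\star a)(g)-a(g)\;=\;\mu(U)^{-1}\int_{U}\bigl(a(h^{-1}g)-a(g)\bigr)\,dh,
$$
I would invoke the left uniform continuity of $a$ (which holds automatically for $a\in C_{c}(G)$) to conclude that the integrand is uniformly small in $g$ once $U$ is contained in a small enough neighborhood $U_0$; combined with the fact that the support of $g\mapsto (e_{U}\star a)(g)-a(g)$ stays inside a fixed compact set when $U\subseteq U_0$, this gives $\|e_{U}\star a - a\|_{1}\to 0$. For the right-hand convergence, substituting $k=h^{-1}g$ and using Corollary \ref{cor:right_haar_translation} yields
$$
(a\star e_{U})(g)\;=\;\mu(U)^{-1}\int_{U} a(gk^{-1})\Delta(k^{-1})\,dk,
$$
so $(a\star e_{U})(g)-a(g)$ is the average over the symmetric set $U$ of $a(gk^{-1})\Delta(k^{-1})-a(g)$; using the right uniform continuity of $a\in C_{c}(G)$ together with the continuity of $\Delta$ at $e$ (where $\Delta(e)=1$), the same support/equicontinuity argument gives $L^{1}$-convergence to zero.

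The main subtlety, and the step I expect to be the trickiest, is the right-sided convergence in the non-unimodular case: the modular function $\Delta$ enters nontrivially, and one must be careful that the averaging domain $U^{-1}=U$ (obtained by symmetry) still produces a genuine neighborhood-average of $a$ translated in a uniformly continuous manner. Once this is handled, combining both convergences on $C_{c}(G)$ with the density argument yields the two-sided approximate identity property, completing the proof.
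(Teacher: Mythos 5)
Your proof is correct: the normalized indicators $e_{U} = \mu(U)^{-1}\mathbf{1}_{U}$ over shrinking symmetric, relatively compact neighborhoods of $e$, the reduction to $C_{c}(G)$ via density and Young's inequality, and the uniform-continuity-plus-compact-support estimate (with the modular function $\Delta$ handled separately on the right) constitute the standard construction. The paper itself gives no proof of this statement --- it is quoted as a Fact from the cited reference --- and the proof there is essentially the same argument you give, so there is nothing to object to.
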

	
	\subsection{Classes of Functionals on Banach Algebras}\label{subsection:classes_of_functionals}
	Let $\A$ be a Banach algebra.
	Every functional $\varphi \in \A^{*}$ induces the linear maps $\mathfrak{L}_{\varphi}, \mathfrak{R}_{\varphi}\colon \A \to \A^{*}$ defined by:
	\begin{align*}
		\mathfrak{R}_{\varphi}(a) & := a \circledast \varphi,\\
		\mathfrak{L}_{\varphi}(a) & := \varphi \circledast a.
	\end{align*}
	\begin{defin}\cite[p.~198]{LauFunctionals}
		Let $\A$ be a Banach algebra.
		A functional $\varphi \in \A^{*}$ is said to be \emph{weakly almost periodic (WAP)} if the map $\mathfrak{L}_{\varphi}\colon \A\to\A^{*}$ is weakly compact, meaning that $\mathfrak{L}_{\varphi}(B_{\A}) = \varphi \circledast B_{\A}$ is weakly relatively compact in $\A^{*}$.
		
		We will write $\wapfunctional(\A)$ for the set of weakly almost periodic functionals over $\A$.
	\end{defin}
	
	\begin{defin}\cite[Def.~2.5]{TameFunc}
		Let $\A$ be a Banach algebra.
		A functional $\varphi \in \ruc(\A)$ is said to be \emph{tame/Asplund} if $\mathfrak{L}_{\varphi}$ factors through a Rosenthal/Asplund Banach space, meaning that there exists bounded operators $\alpha\colon \A \to V, \beta\colon V \to \A^{*}$ such that $\mathfrak{L}_{\varphi} = \beta \circ \alpha$ and $V$ is a Rosenthal/Asplund Banach space.
		
		We will write $\tamefunctional(\A)$/$\aspfunctional(\A)$ for the sets of tame and Asplund functionals, respectively.
	\end{defin}
	
	Recall that an operator $T\colon V\to W$ between two Banach spaces is weakly relatively compact/Asplund/tame if $T(B_{V})$ is weakly relatively compact/Asplund/tame in $W$, respectively.
	The following fact is a consequence of the Davis-Figiel-Johnson-Pe\l{}czy\'{n}ski factorization which can be found in \cite{DFJP}.
	Its application to reflexive, Asplund and Rosenthal spaces can be found in \cite[Cor.~3]{DFJP}, \cite[Sec.~1.3]{Fabian1997}, \cite[Thm.~6.3]{GM-rose}, respectively.
	\begin{f}
		Let $T\colon V \to W$ be an operator between Banach spaces.
		Then $T$ is weakly relatively compact/Asplund/tame if and only if it factors through a reflexive/Asplund/Rosenthal Banach space.
	\end{f}
	\begin{cor} \label{cor:equivalent_tame_functional_definition}
		Let $\A$ be a Banach algebra.
		A functional $\varphi \in \ruc(\A^{*})$ is Asplund/tame if and only if $\mathfrak{L}_{\varphi}(B_{\A})$ is Asplund/tame as a subset of $\A^{*}$.
	\end{cor}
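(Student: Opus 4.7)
The plan is to obtain the corollary as a direct chaining of the preceding fact with the relevant definitions, since both sides of the ``if and only if'' are reformulated as the same assertion about the operator $\mathfrak{L}_{\varphi}$.

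First I would unpack the left-hand side. By the definition immediately above (from \cite[Def.~2.5]{TameFunc}), $\varphi \in \ruc(\A)$ is Asplund (respectively, tame) exactly when the operator $\mathfrak{L}_{\varphi}\colon \A \to \A^{*}$ factors through an Asplund (respectively, Rosenthal) Banach space. Next, I would invoke the factorization fact \cite[Thm.~9.8]{TameLCS} stated just before the corollary: an operator $T\colon V \to W$ between Banach spaces factors through a reflexive/Asplund/Rosenthal space if and only if $T$ is DLP/NP/tame, which by convention means that $T(B_{V})$ is DLP/NP/tame in $W$. Applying this to $T = \mathfrak{L}_{\varphi}$ with $V = \A$ and $W = \A^{*}$ in the Asplund and tame cases gives that factorization through an Asplund/Rosenthal space is equivalent to $\mathfrak{L}_{\varphi}(B_{\A}) = \varphi \cdot B_{\A}$ being NP (Asplund) or tame as a subset of $\A^{*}$.

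Combining these two equivalences yields the corollary. There is essentially no obstacle beyond making the translation clean: the only subtlety is that ``Asplund'' is used both for Banach spaces (in the Namioka--Phelps sense) and for bounded subsets (Definition~\ref{d:AspSet}), but these are reconciled by Corollary~\ref{cor:countDetermined} and Fact~\ref{f:Asp-charact}, so that the word ``Asplund'' applied to $\mathfrak{L}_{\varphi}(B_{\A})$ as a subset of $\A^{*}$ is exactly the meaning of NP-smallness used in the factorization theorem. Since the argument is a two-line chain of equivalences, the write-up can be presented as a single line pointing to the definition and to \cite[Thm.~9.8]{TameLCS}, with no further computation required.
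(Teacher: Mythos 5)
Your proposal is correct and matches the paper's (implicit) argument: the corollary is intended as an immediate consequence of the definition of Asplund/tame functionals via factorization of $\mathfrak{L}_{\varphi}$ together with the quoted factorization theorem \cite[Thm.~9.8]{TameLCS}, using Corollary~\ref{cor:countDetermined} to identify Asplund subsets with $\BNP$-small ones. Nothing further is needed.
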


	\subsection{Bounded Uniformly Equicontinuous Functions}
	\begin{defin}\cite{JanUEB}
		A subset $F\subseteq \RUC(G)$ (resp. $\LUC(G)$) is said to be \emph{right (resp. left) bounded uniformly equicontinuous (UEB)} if it is bounded and for every $\eps > 0$ there exists an entourage $\delta \in \mu_{G}^{r}$ (resp. $\mu_{G}^{l}$) such that:
		$$
		(g, h) \in \delta \Rightarrow \forall \varphi\in F: \lvert \varphi(g) - \varphi(h) \rvert < \eps.
		$$
	\end{defin}
	
	\begin{f}\cite[Lemma~4.2.1]{JanUEB} \label{fact:orbits_are_UEB}
		For a topological group $G$ we have:
		\begin{align*}
			F \subseteq \RUC(G) \text{ is right UEB} &\Rightarrow G\cdot F = \bigcup\{G\cdot \varphi \mid \varphi \in F\} \text{ is right UEB}\\
			F \subseteq \LUC(G) \text{ is left UEB} &\Rightarrow F\cdot G = \bigcup\{\varphi\cdot G \mid \varphi \in F \} \text{ is left UEB}.\\
		\end{align*}
		In particular:
		\begin{align*}
			\varphi \in \RUC(G) &\Rightarrow G\cdot \varphi \text{ is right UEB}\\
			\varphi \in \LUC(G) &\Rightarrow \varphi\cdot G \text{ is left UEB}.\\
		\end{align*}
	\end{f}
	\begin{remark}
		In \cite{JanUEB}, the authors use an opposite notation. 
		Namely, their LUC is our RUC and vice versa.
		We adapted their results to fit our notation.
	\end{remark}

	\begin{defin}\cite{JanUEB}
		The right UEB topology on $\RUC(G)^{*}$ is the topology of uniform convergence on right UEB subsets of $\RUC(G)$.
		Namely, a net ${\{\mu_{\lambda}\}_{\lambda \in \Lambda} \subseteq \RUC(G)^{*}}$ converges to $\mu \in \RUC(G)^{*}$ in the right UEB topology if and only if for every UEB subset $F \subseteq \RUC(G)$ and $\eps > 0$ there exists $\lambda_{0} \in \Lambda$ such that:
		$$
		\forall \lambda_{0} \leq \lambda \in \Lambda\ \forall \varphi \in F: \lvert \langle \varphi, \mu_{\lambda} - \mu \rangle\rvert < \eps.
		$$
		
		The left UEB topology on $\LUC(G)^{*}$ is defined analogously.
		
	\end{defin}

	
	Let $G$ be a locally compact group.
	A point mass is a measure $\rho$ of $G$ such that $\rho(A) = 1_{A}(g)$ for some $g \in G$.
	A measure on $G$ is said to be \emph{molecular} if it a linear combination of point masses \cite[Def.~5.10]{pachl2012uniform}.
	Write $\mol(G)$ for the space of all molecular measures.
	\begin{f}\cite[Thm.~6.6~and~8.18]{pachl2012uniform} \label{fact:acx_G_is_UEB_dense}
		Let $G$ be a locally compact group.
		Then $\mol(G) \cap B_{M(G)}$ is right and left UEB-dense in $B_{M(G)}$.
	\end{f}

	\section{Group and Algebra Actions with Locally Compact Groups} \label{section:group_algebra_actions}
	As we have mentioned before, we are aiming to study the relationship between the dynamics of elements of $L^{\infty}(G)$ as functions and as functionals.
	This could be done using convolutions and translations only.
	However, we find that these phenomena are best described through the language of group and algebra actions.
	In this section, we outline some needed definitions and properties regarding the group algebra and related objects.
	
	In many cases, the literature contains several sensible ways to make each definition. For example, a group $G$ can act isometrically on $L^{1}(G)$ from the left either via $(g \odot a)(h) = a(g^{-1}h)$ or ${(g\odot a) = a(hg)\Delta(g)}$.
	Here we will make a coherent choice preferable for our needs.
	We include some self-contained proofs in the appendix for the convenience of the reader.
	
	We conclude this section by recalling some known facts about the space of finite Radon measures which will also be needed later.
	
	\subsection{Basic Definitions}
	Let $G$ be a locally compact group and $\lambda$ be its left Haar measure.
	As in \cite[Thm.~15.12]{AbstractHarmonic}, we will write $\Delta\colon G \to \R^{+}$ for the modular function of $G$, namely the continuous homomorphism satisfying:
	$$
	  \forall g \in G : \lambda(A \cdot  g) = \Delta(g^{-1})\lambda(A),
	$$
	for every measurable set $A \subseteq G$.
	We write $L^{1}(G)$ and $L^{\infty}(G)$ for the spaces of absolutely integrable functions and essentially bounded functions over $G$, respectively.
	We also identify $L^{\infty}(G)$ as the dual of $L^{1}(G)$ via the map:
	$$
	\forall a\in L^{1}(G), \varphi \in L^{\infty}(G): \langle a, \varphi \rangle = \langle \varphi, a\rangle := \int_{G} a(g) \varphi(g) d \lambda(g).
	$$
	The involution $\widetilde{\cdot}\colon L^{1}(G) \to L^{1}(G)$ is defined as:
	$$
	\widetilde{a}(g) := a(g^{-1}) \Delta(g^{-1}).
	$$
	We also have:
	$$
	\breve{a}(g) := a(g^{-1}).
	$$
	\begin{f} \label{fact:inverse_integration} \cite[Thm.~15.14]{AbstractHarmonic}
		For every $\varphi \in L^{1}(G)$ we have:
		$$
		\int_{G} \widetilde{\varphi}(g) d\lambda(g) = 
		\int_{G} \varphi(g^{-1}) \Delta(g^{-1}) d\lambda(g) = 
		\int_{G} \varphi(g) d\lambda(g).
		$$
	\end{f}
	
	\begin{theoremEnd}[end, restate, text link=The proof is given in the Appendix.]{cor}[]
		\label{cor:right_haar_translation}
		For every $a \in L^{1}(G)$ and $g \in G$, we have:
		$$
		\int_{G} a(hg) \Delta(g) d\lambda(h) = \int_{G} a(h) d\lambda(h).
		$$
	\end{theoremEnd}
	
	\begin{proofE}
		\begin{align*}
			\int_{G} a(hg) \Delta(g) d\lambda(h) & =
			\int_{G} \widetilde{a}(g^{-1}h^{-1}) \Delta(g^{-1}h^{-1}) \Delta(g) d\lambda(h) \\
			& = \int_{G} \widetilde{a}(g^{-1}h^{-1}) \Delta(h^{-1}) d\lambda(h) \\
			& = \int_{G} (g\odot\widetilde{a})(h^{-1}) \Delta(h^{-1}) d\lambda(h) \\
			& \underset{\ref{fact:inverse_integration}}{=} \int_{G} (g\odot\widetilde{a})(h) d\lambda(h) \\
			& = \int_{G} \widetilde{a}(g^{-1}h) d\lambda(h) \\
			& = \int_{G} \widetilde{a}(h) d\lambda(h) \\
			& \underset{\ref{fact:inverse_integration}}{=} \int_{G} a(h) d\lambda(h).
		\end{align*}
	\end{proofE}
	
	Suppose that $a, b$ are both measurable functions on $G$.
	Their convolution $a \star b$ is defined as:
	$$
	(a \star b)(g) := \int_{G} a(h) b(h^{-1}g) d\lambda(h).
	$$
	Note that this integral is not necessarily convergent.
	\begin{f}\cite[Cor.~20.14, 20.19]{AbstractHarmonic}
		\ben
			\item If $a, b \in L^{1}(G)$, then $a \star b \in L^{1}(G)$, and $\lVert a \star b \rVert_{1} \leq \lVert a \rVert_{1} \lVert b \rVert_{1}$. 
			In other words, $L^{1}(G)$ together with convolution is a Banach algebra.
			\item If $a \in L^{1}(G)$ and $\varphi \in L^{\infty}(G)$, then $a \star \varphi \in L^{\infty}(G)$ and $\varphi \star \breve{a} \in L^{\infty}(G)$.
		\een
	\end{f}

	\begin{f} \cite[32.45]{AbstractHarmonic2}\label{fact:l1_l_infinity_convolution}
		For every locally compact group we have:
		\begin{align*}
			L^{1}(G) \star L^{\infty}(G) & = L^{1}(G) \star \RUC(G) = \RUC(G),\\
			L^{\infty} \star \breve{L}^{1}(G) & = \LUC(G) \star \breve{L}^{1}(G) = \LUC(G),
		\end{align*}
		where $\breve{L}^{1}(G) = \{\breve{a} \mid a \in L^{1}(G) \}$.
	\end{f}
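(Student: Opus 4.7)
My plan is to establish the chain of inclusions
$$\RUC(G) \subseteq L^{1}(G) \star \RUC(G) \subseteq L^{1}(G) \star L^{\infty}(G) \subseteq \RUC(G).$$
The middle inclusion is immediate since $\RUC(G) \subseteq L^{\infty}(G)$. For the rightmost inclusion, take $a \in L^{1}(G)$, $f \in L^{\infty}(G)$, and $u \in G$. Using the left-invariance of the Haar measure to substitute $h \mapsto uh$ inside $(a \star f)(ug) = \int_{G} a(h) f(h^{-1}ug)\,dh$, I obtain
$$(a \star f)(ug) - (a \star f)(g) = \int_{G} \bigl[a(uh) - a(h)\bigr] f(h^{-1}g)\,dh,$$
whence $|(a \star f)(ug) - (a \star f)(g)| \leq \|L_{u^{-1}}a - a\|_{1} \|f\|_{\infty}$, where $L_{t}a(h) := a(t^{-1}h)$. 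Since $t \mapsto L_{t}a$ is norm-continuous from $G$ into $L^{1}(G)$, this bound is independent of $g$ and tends to $0$ as $u \to e$, giving $a \star f \in \RUC(G)$.

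For the leftmost inclusion, I would use a bounded approximate identity $\{e_{\alpha}\}$ of $L^{1}(G)$ with $e_{\alpha} \geq 0$, $\|e_{\alpha}\|_{1} = 1$, and supports shrinking to $\{e\}$. For $f \in \RUC(G)$,
$$(e_{\alpha} \star f)(g) - f(g) = \int_{G} e_{\alpha}(h)\bigl[f(h^{-1}g) - f(g)\bigr]\,dh,$$
and right uniform continuity of $f$ ensures $|f(h^{-1}g) - f(g)|$ is small, uniformly in $g$, once $h$ lies in a sufficiently small neighborhood of $e$. Thus $\|e_{\alpha} \star f - f\|_{\infty} \to 0$, which only shows that $L^{1}(G) \star \RUC(G)$ is \emph{dense} in $\RUC(G)$. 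To upgrade density to actual set-equality I invoke \textbf{Cohen's factorization theorem}: $\RUC(G)$ is a Banach left module over the Banach algebra $L^{1}(G)$ under convolution, $L^{1}(G)$ admits a bounded approximate identity, and the module action is essential by the preceding estimate. Cohen's theorem then yields a factorization $f = a \star f'$ with $a \in L^{1}(G)$ and $f' \in \RUC(G)$.

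The $\LUC(G)$ chain is handled symmetrically, the involution $\breve{\cdot}$ reversing the sidedness of the convolution. From the definitions, $(f \star \breve{a})(g) = \int_{G} f(h) a(g^{-1}h)\,dh$; substituting $h \mapsto gh$ gives
$$(f \star \breve{a})(gs) - (f \star \breve{a})(g) = \int_{G} f(gh)\bigl[a(s^{-1}h) - a(h)\bigr]\,dh,$$
which is bounded by $\|f\|_{\infty}\|L_{s}a - a\|_{1}$ and vanishes uniformly in $g$ as $s \to e$, yielding $f \star \breve{a} \in \LUC(G)$. The approximate-identity approximation and Cohen's factorization then apply to $\LUC(G)$ viewed as a right $L^{1}(G)$-module with action $a \cdot f := f \star \breve{a}$, closing the circle. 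The main obstacle is precisely the final step: the passage from mere density to exact set-equality, which requires the nontrivial input of Cohen's factorization theorem.
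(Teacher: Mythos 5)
The paper gives no proof of this Fact---it is quoted directly from Hewitt--Ross \cite[32.45]{AbstractHarmonic2}---and your argument is exactly the standard one used there: the norm-continuity of translation in $L^{1}(G)$ gives $L^{1}(G)\star L^{\infty}(G)\subseteq \RUC(G)$, and the bounded approximate identity of $L^{1}(G)$ (Fact \ref{fact:L1G_has_approximate_identity}) combined with the Cohen--Hewitt factorization theorem upgrades density to the equality $L^{1}(G)\star\RUC(G)=\RUC(G)$, with the $\LUC$ case handled symmetrically. Your estimates and the identification of Cohen's theorem as the essential nontrivial ingredient are both correct, so the proposal is sound and follows essentially the same route as the cited source.
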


	\subsection{Actions and Their Properties}
	It is clear that $G$ can induce a group action on both $L^{1}(G)$ and $L^{\infty}(G)$.
	As we have just seen, $L^{1}(G)$ can also induce an action on $L^{\infty}(G)$. 
	We will soon give a list of definitions for these actions, and then show some of their properties:
	\ben
		\item Relationship to duality - Lemmas \ref{lemma:L1_G_dual_action} and \ref{lemma:G_action_on_L_inf_duality}.
		\item Associativity - Proposition \ref{prop:L1_G_action_associativity}.
		\item Relationship to convolution - Proposition \ref{ex:l_infinity_bimodule}.
	\een
	
	To avoid possible confusions, we will use different symbols depending on the action:
	\ben
		\item If $g \in G$ and $a \in L^{1}(G)$, we will write $g \odot a$ and $a \odot g$.
		\item If $g \in G$ and $\varphi \in L^{\infty}(G)$, we will write $g \cdot \varphi$ and $\varphi \cdot g$.
		\item If $a \in L^{1}(G)$ and $\varphi \in L^{\infty}(G)$, we will write $a \circledast \varphi$ and $\varphi \circledast a$. 
		This is the same notation used in Subsection \ref{subsection:banach_algebra_action_on_dual}, and we will see that the two definitions agree (Lemma \ref{lemma:L1_G_dual_action}).
	\een
	We will also exclusively use $f \star g$ to represent function convolution, whether they belong to $L^{1}(G)$ or $L^{\infty}(G)$.
	
	\begin{defin}
		Let $g, h \in G$, $a, b \in L^{1}(G)$ and $\varphi \in L^{\infty}(G)$.
		\begin{align*}
			(g\odot a)(h) &:= a(g^{-1} h)& \quad 
			(a \odot g)(h) &:= a(h g^{-1})\Delta(g^{-1})\\
			(g\cdot \varphi)(h) &:= \varphi(hg)& \quad
			(\varphi \cdot g)(h) &:= \varphi(gh) \\
			(a \circledast \varphi)(h) &:= \langle \varphi, h \odot a \rangle& \quad
			(\varphi \circledast a)(h) &:= \langle \varphi, a \odot h\rangle.
		\end{align*}
	\end{defin}

	Note that the definition of the $G$-action on $L^{\infty}(G)$ coincides with the notions of Subsection \ref{subsection:group_actions}, so using the same notation is justified.
	Conversely, the $G$-action on $L^{1}(G)$ is distinguished, and one must be cautious in cases of functions belonging to $L^{1}(G) \cap L^{\infty}(G)$.
	We will always use the appropriate symbol ($\cdot$ or $\odot$) and thus avoid possible confusions.
	\begin{theoremEnd}[end, restate, text link=The proof is given in the Appendix.]{remark}[] \label{remark:action_respect_norm}
		These actions are well-defined, meaning that: 
		$$
		  g \odot a, a \odot g \in L^{1}(G) \text{ and } a \circledast \varphi, \varphi \circledast a, g \cdot \varphi, \varphi \cdot g \in L^{\infty}(G).
		$$
		In fact, they respect the norms of $L^{1}(G)$ and $L^{\infty}(G)$ in the following sense:
		\begin{align*}
			\lVert \varphi \rVert_{\infty} & = \lVert g \cdot \varphi \rVert_{\infty} = \lVert \varphi \cdot g \rVert_{\infty}\\
			\lVert a \rVert_{1} & = 
			\lVert g \odot a \rVert_{1} = 
			\lVert a \odot g \rVert_{1}\\
			\lVert a \rVert_{1}\lVert \varphi \rVert_{\infty} & \geq \lVert a \circledast \varphi  \rVert_{\infty}, \lVert \varphi \circledast a \rVert_{\infty}.
		\end{align*}
	\end{theoremEnd}
	\begin{proofE}
		The first equality is easy to prove.
		As for the second:
		\begin{align*}
			\lVert g \odot a \rVert_{1} & = 
			\int_{G} \lvert(g \odot a)(h)\rvert d\lambda(h)\\
			& = \int_{G} \lvert a(g^{-1}h)\rvert d\lambda(h)\\
			& = \int_{G} \lvert a(h)\rvert d\lambda(h)\\
			& = \lVert a \rVert_{1}\\
			\lVert a \odot g \rVert_{1} & = 
			\int_{G} \lvert(a \odot g)(h)\rvert d\lambda(h)\\
			& = \int_{G} \lvert a(hg^{-1})\Delta(g^{-1})\rvert d\lambda(h)\\
			& = \int_{G} \lvert a(hg^{-1})\rvert \Delta(g^{-1}) d\lambda(h)\\
			& \underset{\ref{cor:right_haar_translation}}{=} \int_{G} \lvert a(h)\rvert d\lambda(h)\\
			& = \lVert a \rVert_{1}.
		\end{align*}
		As a consequence, the final inequality is also easy.
	\end{proofE}

	\begin{lemmaE}[][end, restate, text link=The proof is given in the Appendix.] \label{lemma:L1_G_dual_action}
		For every $\varphi \in L^{\infty}(G)$ and $a, b \in L^{1}(G)$, we have:
		\begin{align*}
			\langle a \circledast \varphi, b \rangle & = \langle \varphi, b \star a \rangle,\\
			\langle \varphi \circledast a, b\rangle & = \langle \varphi, a \star b\rangle.\\
		\end{align*}
		In other words, $\circledast$ is exactly the action of the Banach algebra $L^{1}(G)$ on its dual $L^{\infty}(G)$ defined in Subsection \ref{subsection:banach_algebra_action_on_dual}.
	\end{lemmaE}
	\begin{proofE}
		In the following progression, the Fubini-Tonelli theorem \cite[Thm.~13.10]{AbstractHarmonic} is referred to as ``F.T.". 
		A justification will appear later.
		\begin{align*}
			\langle a \circledast \varphi, b \rangle & = 
			\int_{G} (a \circledast \varphi)(g) b(g) d\lambda(g)\\
			& = \int_{G} \langle \varphi, g \odot a\rangle b(g) d\lambda(g)\\
			& = \int_{G} b(g) \int_{G} \varphi(h) (g \odot a)(h) d\lambda(h) d\lambda(g)\\
			& = \int_{G}  b(g) \int_{G} \varphi(h) a(g^{-1}h) d\lambda(h) d\lambda(g)\\
			& \underset{F.T.}{=} \int_{G} \varphi(h) \int_{G} b(g) a(g^{-1}h) d\lambda(g) d\lambda(h)\\
			& = \int_{G} \varphi(h) (b \star a)(h) d\lambda(h)\\
			& = \langle \varphi, b \star a \rangle\\
			\langle \varphi \circledast a, b\rangle & =
			\int_{G} (\varphi \circledast a)(g) b(g) d\lambda(g)\\
			& = \int_{G} \langle \varphi, a \odot g \rangle b(g) d\lambda(g)\\
			& = \int_{G} b(g) \int_{G} \varphi(h) (a \odot g)(h) d\lambda(h) d\lambda(g)\\
			& = \int_{G} b(g) \int_{G} \varphi(h) a(hg^{-1})\Delta(g^{-1}) d\lambda(h) d\lambda(g)\\
			& \underset{F.T.}{=} \int_{G} \varphi(h) \int_{G} b(g) a(hg^{-1})\Delta(g^{-1}) d\lambda(g) d\lambda(h)\\
			& \underset{\ref{fact:inverse_integration}}{=} \int_{G} \varphi(h) \int_{G} a(hg) b(g^{-1}) d\lambda(g) d\lambda(h)\\
			& = \int_{G} \varphi(h) \int_{G} a(g) b(g^{-1}h) d\lambda(g) d\lambda(h)\\
			& = \int_{G} \varphi(h) (a \star b)(h) d\lambda(h)\\
			& = \langle \varphi, a \star b \rangle.
		\end{align*}
		To justify the application of the Fubini-Tonelli theorem, we need to show that any of the integrals are absolutely convergent.
		Indeed, using Remark \ref{remark:action_respect_norm}:
		\begin{align*}
			\int_{G}\int_{G} \lvert b(g) \varphi(h) (g \odot a)(h)\rvert d\lambda(h) d\lambda(g) & \leq
			\lVert \varphi \rVert_{\infty} \int_{G} \int_{G} \lvert b(g) (g\odot a)(h) \rvert d\lambda(h) d\lambda(g)\\
			& = \lVert \varphi \rVert_{\infty} \int_{G} \lvert b(g) \rvert \lVert g\odot a\rVert_{1} d\lambda(g)\\
			& = \lVert \varphi \rVert_{\infty} \int_{G} \lvert b(g) \rvert \lVert a\rVert_{1} d\lambda(g)\\
			& = \lVert \varphi \rVert_{\infty} \lVert a\rVert_{1} \int_{G} \lvert b(g) \rvert d\lambda(g)\\
			& = \lVert \varphi \rVert_{\infty} \lVert a\rVert_{1} \lVert b \rVert_{1} < \infty.
		\end{align*}
		A very similar argument justifies the second application of the Fubini-Tonelli theorem.
	\end{proofE}
	\begin{lemmaE} [][end, restate, text link=The proof is given in the Appendix.] \label{lemma:G_action_on_L_inf_duality}
		For every $g \in G, a\in L^{1}(G), \varphi \in L^{\infty}(G)$:
		\begin{align*}			
			\langle g \cdot \varphi, a\rangle & = \langle \varphi, a \odot g\rangle := (\varphi \circledast a)(g)\\
			\langle \varphi \cdot g, a \rangle & = \langle \varphi, g\odot a\rangle := (a \circledast \varphi)(g).
		\end{align*}
	\end{lemmaE}
	\begin{proofE}
		\begin{align*}
			\langle g \cdot \varphi, a\rangle & = 
			\int_{G} (g \cdot \varphi)(h) a(h) d\lambda(h)\\
			& = \int_{G} \varphi(hg) a(h) d\lambda(h)\\
			& = \int_{G} \varphi(hg) \Delta(g) \Delta(g^{-1})a(hgg^{-1}) d\lambda(h)\\
			& \underset{\ref{cor:right_haar_translation}}{=} \int_{G} \varphi(h) a(hg^{-1}) \Delta(g^{-1}) d\lambda(h)\\
			& = \int_{G} \varphi(h) (a \odot g)(h) d\lambda(h)\\
			& = \langle \varphi, a \odot g\rangle,\\
			\langle \varphi \cdot g, a \rangle & = 
			\int_{G} (\varphi \cdot g)(h) a(h)d \lambda(h)\\
			& = \int_{G} \varphi(g h) a(h)d \lambda(h)\\
			& = \int_{G} \varphi(h) a(g^{-1} h)d \lambda(h)\\
			& = \int_{G} \varphi(h) (g \odot a)(h)d \lambda(h)\\
			& = \langle \varphi, g \odot a \rangle.
		\end{align*}
	\end{proofE}
	
	\begin{theoremEnd}[end, restate, text link=The proof is given in the Appendix.]{prop}[]
		\label{prop:L1_G_action_associativity}
		Let $g, h \in G$, $a, b \in L^{1}(G)$ and $\varphi \in L^{\infty}(G)$.
		\begin{align}
			g \odot (a \odot h) & = (g \odot a) \odot h \label{prop:L1_G_action_associativity:L1_G_bimodule}\\
			g \cdot (\varphi \cdot h) & = (g \cdot \varphi) \cdot h \label{prop:L1_G_action_associativity:L_inf_G_bimodule}\\
			g \odot (a \star b) & = (g \odot a) \star b \label{prop:L1_G_action_associativity:G_L1_L1}\\
			a \star (b \odot g) & = (a \star b) \odot g \label{prop:L1_G_action_associativity:L1_L1_G}\\
			a \star (g \odot b) & = (a \odot g) \star b \label{prop:associativity_of_locally_compact_groups:L1_group_L1}\\
			a \circledast (\varphi \circledast b) & = (a \circledast \varphi) \circledast b \label{prop:L1_G_action_associativity:L_inf_L1_bimodul}\\
			g \cdot (\varphi \circledast a) & = (g \cdot \varphi) \circledast a \label{prop:L1_G_action_associativity:G_L_inf_L1}\\
			a \circledast (\varphi \cdot g) & = (a \circledast \varphi) \cdot g \label{prop:L1_G_action_associativity:L1_L_inf_G}\\
			g \cdot (a \circledast \varphi) & = (g \odot a) \circledast \varphi \label{prop:associativity_of_locally_compact_groups:group_algebra_functional}\\
			\varphi \circledast (a \odot g) & = (\varphi \circledast a) \cdot g \label{prop:associativity_of_locally_compact_groups:functional_algebra_group}\\
			a \circledast (g \cdot \varphi) & = (a \odot g) \circledast \varphi \label{prop:associativity_of_locally_compact_groups:L1_group_L_inf}\\
			\varphi \circledast (g \odot a) & = (\varphi \cdot g) \circledast a. \label{prop:associativity_of_locally_compact_groups:L_inf_froup_L1}
		\end{align}
	\end{theoremEnd}
	\begin{proofE}
		Suppose that $x \in G$.
		\begin{align*}
			\tag{\ref{prop:L1_G_action_associativity:L1_G_bimodule}}
			(g \odot (a \odot h))(x) & =
			(a \odot h)(g^{-1}x) \\
			& = a(g^{-1}x h^{-1}) \Delta(h)\\
			& = (g \odot a)(x h^{-1}) \Delta(h)\\
			& = ((g \odot a) \odot h)(x),
		\end{align*}
		\begin{align*}
			\tag{\ref{prop:L1_G_action_associativity:L_inf_G_bimodule}}
			(g \cdot (\varphi \cdot h))(x) & = (\varphi \cdot h)(x g)\\
			& = \varphi(hxg)\\
			& = (g \cdot \varphi)(hx)\\
			& = ((g \cdot \varphi) \cdot h)(x),
		\end{align*}
		\begin{align*}
			 \tag{\ref{prop:L1_G_action_associativity:G_L1_L1}}
			(g \odot (a \star b))(h) & = (a \star b)(g^{-1}h) \\
			& = \int_{G} a(x) b(x^{-1}g^{-1} h) d\lambda(x)\\
			& = \int_{G} a(g^{-1} g x) b((g x)^{-1} h) d\lambda(x)\\
			& = \int_{G} a(g^{-1} x) b(x^{-1} h) d\lambda(x)\\
			& = \int_{G} (g \odot a)(x) b(x^{-1} h) d\lambda(x)\\
			& = ((g \odot a) \star b)(h),
		\end{align*}
		\begin{align*}
			\tag{\ref{prop:L1_G_action_associativity:L1_L1_G}}
			(a \star (b \odot g))(h) & = 
			\int_{G} a(x) (b\odot g)(x^{-1} h) d\lambda(x)\\
			& = \int_{G} a(x) b(x^{-1} h g^{-1}) \Delta(g^{-1}) d\lambda(x)\\
			& = \Delta(g^{-1}) \int_{G} a(x) b(x^{-1} (h g^{-1})) d\lambda(x)\\
			& = (a \star b)(hg^{-1}) \Delta(g^{-1})\\
			& = ((a \star b) \odot g)(h),
		\end{align*}
		\begin{align*}
			\tag{\ref{prop:associativity_of_locally_compact_groups:L1_group_L1}}
			(a \star (g \odot b))(h) & = 
			\int_{G} a(x) (g \odot b)(x^{-1}h) d\lambda(x)\\
			& = \int_{G} a(x) b(g^{-1}x^{-1}h) d\lambda(x)\\
			& \underset{\ref{cor:right_haar_translation}}{=} \int_{G} a(xg^{-1})\Delta(g^{-1}) b(x^{-1}h) d\lambda(x)\\
			& = \int_{G} (a\odot g)(x) b(x^{-1}h) d\lambda(x)\\
			& = ((a \odot g) \star b)(h),
		\end{align*}
		\begin{align*}
			\tag{\ref{prop:L1_G_action_associativity:L_inf_L1_bimodul}}
			(a \circledast (\varphi \circledast b))(g) & = 
			\langle \varphi \circledast b, g\odot a\rangle\\
			& \underset{\ref{lemma:L1_G_dual_action}}{=} \langle \varphi, b \star (g\odot a) \rangle\\
			& \underset{\ref{prop:associativity_of_locally_compact_groups:L1_group_L1}}{=} \langle \varphi, (b \odot g) \star a\rangle\\
			& \underset{\ref{lemma:L1_G_dual_action}}{=} \langle a \circledast \varphi, b\odot g\rangle\\
			& = ((a \circledast \varphi) \circledast b)(g),
		\end{align*}
		\begin{align*}
			\tag{\ref{prop:L1_G_action_associativity:G_L_inf_L1}}
			(g \cdot (\varphi \circledast a))(h) & = 
			(\varphi \circledast a)(hg)\\
			& = \langle \varphi, a \odot (hg)\rangle\\
			& = \langle \varphi, (a \odot h) \odot g)\rangle\\
			& \underset{\ref{lemma:G_action_on_L_inf_duality}}{=} \langle g\cdot \varphi, a\odot h\rangle\\
			& = ((g\cdot \varphi) \circledast a)(h),
		\end{align*}
		\begin{align*}
			\tag{\ref{prop:L1_G_action_associativity:L1_L_inf_G}}
			(a \circledast (\varphi \cdot g))(h) & = 
			\langle \varphi \cdot g, h \odot a \rangle\\
			& \underset{\ref{lemma:G_action_on_L_inf_duality}}{=} \langle \varphi, g\odot (h\odot a)\rangle \\
			& = \langle \varphi, (gh) \odot a \rangle\\
			& = (a \circledast \varphi)(gh)\\
			& = ((a\circledast \varphi) \cdot g)(h),
		\end{align*}
		\begin{align*}
			\tag{\ref{prop:associativity_of_locally_compact_groups:group_algebra_functional}}
			(g \cdot (a \circledast \varphi))(h) & = (a \circledast \varphi)(hg)\\
			& = \langle \varphi, (hg) \odot a \rangle\\
			& = \langle \varphi, h \odot (g \odot a) \rangle\\
			& = ((g\odot a) \circledast \varphi)(h),
		\end{align*}
		\begin{align*}
			\tag{\ref{prop:associativity_of_locally_compact_groups:functional_algebra_group}}
			(\varphi \circledast (a \odot g))(h) & = 
			\langle \varphi, (a\odot g) \odot h\rangle\\
			& = \langle \varphi, a\odot (gh) \rangle\\
			& = (\varphi \circledast a)(gh)\\
			& = ((\varphi \circledast a)\cdot g)(h),
		\end{align*}
		\begin{align*}
			\tag{\ref{prop:associativity_of_locally_compact_groups:L1_group_L_inf}}
			(a \circledast (g \cdot \varphi))(h) & = \langle g\cdot \varphi, h \odot a\rangle\\
			& \underset{\ref{lemma:G_action_on_L_inf_duality}}{=} \langle \varphi, (h \odot a) \odot g\rangle\\
			& \underset{\ref{prop:L1_G_action_associativity:L1_G_bimodule}}{=}  \langle \varphi, h \odot (a\odot g)\rangle\\
			& = ((a \odot g) \circledast \varphi)(h),
		\end{align*}
		\begin{align*}
			\tag{\ref{prop:associativity_of_locally_compact_groups:L_inf_froup_L1}}
			(\varphi \circledast (g \odot a))(h) & = 
			\langle \varphi, (g\odot a) \odot h\rangle\\
			& \underset{\ref{prop:L1_G_action_associativity:L1_G_bimodule}}{=} \langle \varphi, g \odot (a \odot h) \rangle\\
			& \underset{\ref{lemma:G_action_on_L_inf_duality}}{=} \langle \varphi\cdot g, a\odot h\rangle\\
			& = ((\varphi \cdot g) \circledast a)(h).
		\end{align*}
	\end{proofE}


	\begin{theoremEnd}[end, restate, text link=The proof is given in the Appendix.]{prop}[]\cite[Prop.~3.6]{Yassin} \label{ex:l_infinity_bimodule}
		For every $a \in L^{1}(G)$ and $\varphi \in L^{\infty}(G)$, we have
		\begin{align*}
			a \circledast \varphi & = \varphi\star\breve{a},\\
			\varphi \circledast a & = \widetilde{a} \star \varphi.
		\end{align*}
	\end{theoremEnd}
	\begin{proofE}
		\begin{align*}
			(a \circledast \varphi)(g) & := \langle \varphi, g \odot a\rangle\\
			& = \int_{G} \varphi(h) (g\odot a)(h) d\lambda(h)\\
			& = \int_{G} \varphi(h) a(g^{-1} h) d\lambda(h)\\
			& = \int_{G} \varphi(h) \breve{a}(h^{-1}g) d\lambda(h)\\
			& = (\varphi \star \breve{a})(g),\\
			(\varphi \circledast a)(g) & := \langle \varphi, a \odot g \rangle\\
			& = \int_{G} \varphi(h) (a\odot g)(h) d\lambda(h)\\
			& = \int_{G} \varphi(h) a(hg^{-1})\Delta(g^{-1}) d\lambda(h)\\
			& = \int_{G} \varphi(h) a(hg^{-1})\Delta(hg^{-1})\Delta(h^{-1}) d\lambda(h)\\
			& = \int_{G} \varphi(h) \widetilde{a}(gh^{-1})\Delta(h^{-1}) d\lambda(h)\\
			& \underset{\ref{fact:inverse_integration}}{=} \int_{G} \varphi(h^{-1}) \widetilde{a}(gh) d\lambda(h)\\
			& = \int_{G} \widetilde{a}(h) \varphi(h^{-1}g) d\lambda(h)\\
			& = (\widetilde{a} \star \varphi)(g).
		\end{align*}
	\end{proofE}

	\begin{f} \label{fact:RUC_is_essential}
		For every locally compact $G$, $\RUC(G)$ (resp. $\LUC(G)$) is an essential right (resp. left) $L^{1}(G)$-module.
	\end{f}
	\begin{proof}
		Apply Fact \ref{fact:l1_l_infinity_convolution} and Proposition \ref{ex:l_infinity_bimodule}.
	\end{proof}

	\subsection{The Space of Finite Radon Measures}
	Let $M(G)$ be the space of finite Radon measures on $G$. 
	We will consider it as a subspace of the dual of $C_{b}(G)$ via the map
	$$
	  \forall \varphi \in C_{b}(G), \mu \in M(G): \langle \varphi, \mu \rangle  = \langle \mu, \varphi\rangle := \int_{G}\varphi(g)d \mu(g).
	$$
	We will identify $L^{1}(G)$ and $G$ as subsets of $M(G)$ via the natural maps $\Phi\colon L^{1}(G) \to M(G), {\Psi\colon G\to M(G)}$:
	\begin{align*}
		\forall a \in L^{1}(G): (\Phi(a))(A) & := \int_{A} a(h) d\lambda(h)\\
		\forall g \in G: (\Psi(g))(A) & := \begin{cases}
			1 & g \in A\\
			0 & g \notin A
		\end{cases},\\
	\end{align*}
	for measurable $A \subseteq G$.
	Note that $\acx\Psi(G) = \mol(G) \cap B_{M(G)}$.

	\begin{theoremEnd}[end, restate, text link=The proof is given in the Appendix.]{lemma}[] \label{lemma:duality_of_measures_embedding}
		For every $\varphi \in C_{b}(G), g \in G$ and $a \in L^{1}(G)$, we have:
		\begin{align*}
			\langle \varphi, \Psi(g)\rangle & = \varphi(g)\\
			\langle \varphi, \Phi(a)\rangle & =\langle \varphi, a\rangle.
		\end{align*}
	\end{theoremEnd}
	\begin{proofE}
		Indeed:
		\begin{align*}
			\langle \varphi, \Psi(g)\rangle & =
			\int_{G} \varphi(h) d(\Psi(g))(h)\\
			& = \varphi(g),
		\end{align*}
		\begin{align*}
			\langle \varphi, \Phi(a)\rangle & = 
			\int_{G} \varphi(h) (d\Phi(a))(h)\\
			& = \int_{G} \varphi(h) a(h)dh\\
			& = \langle \varphi, a\rangle.
		\end{align*}
	\end{proofE}

	\section{Relation Between Tameness and Co-Tameness} \label{section:tameness_and_cotameness}
	Let $V$ be a Banach space and $M \subseteq V^{*}$ be a weak-star compact subset.
	Here we will study the relation between $M$ being tame and co-tame.
%
%
%

	\begin{lemma} \label{lemma:independent_sequence_inversion}
		Let $K$ be a compact space and $F = \{f_{n}\}_{n \in \N} \subseteq C(K)$ be an independent sequence over $K$.
		Write $r\colon K \to \R^{C(K)}$ for the evaluation map.
		Then there exists a sequence $\{x_{n}\}_{n \in \N} \subseteq K$ such that $\{r(x_{n})\}_{n \in \N}$ is independent over $F$.
	\end{lemma}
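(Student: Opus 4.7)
The plan is to re-enumerate the independent sequence $\{f_n\}$ by the countable set $\mathrm{Fin}(\N)$ of finite subsets of $\N$, and then extract the points $x_m$ directly from the strengthened form of independence over compacta provided by Fact~\ref{f:independence_over_compact_sets}. The key observation is that independence depends only on the unordered family: any bijection $\phi \colon \N \to \mathrm{Fin}(\N)$ sends finite disjoint pairs to finite disjoint pairs, so the re-indexed family $\{g_T := f_{\phi^{-1}(T)}\}_{T \in \mathrm{Fin}(\N)}$ is again an independent sequence in $C(K)$ with the same bounds $a < b$.

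Next, for each $m \in \N$ I would consider the partition $\mathrm{Fin}(\N) = (\mathrm{Fin}(\N) \setminus B_m) \sqcup B_m$, where $B_m := \{T \in \mathrm{Fin}(\N) : m \in T\}$. Both pieces are infinite, but Fact~\ref{f:independence_over_compact_sets} applies regardless and produces a point $x_m \in K$ satisfying $g_T(x_m) > b$ whenever $m \in T$ and $g_T(x_m) < a$ whenever $m \notin T$.

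To verify that $\{r(x_m)\}_{m \in \N}$ is independent over $F$, pick any finite disjoint $P', M' \subseteq \N$ and use the single witness $T := M' \in \mathrm{Fin}(\N)$. For $m \in M'$ we have $m \in T$, hence $g_T(x_m) > b$; for $m \in P'$ we have $m \notin M' = T$, hence $g_T(x_m) < a$. Since $g_T$ is a member of $F$, this exhibits the required witness in $F$ for every finite disjoint column pattern, with the same bounds $a < b$.

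The only point requiring care is that Fact~\ref{f:independence_over_compact_sets} must genuinely tolerate both sides of the partition being infinite, which it does as stated. Conceptually, the argument just packages the classical combinatorial fact that $\N$ admits a countable independent family of subsets — realized here as $\{B_m\}_{m \in \N}$ inside $\mathrm{Fin}(\N)$ — so I do not anticipate any real obstacle beyond the bookkeeping of the re-indexing.
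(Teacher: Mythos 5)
Your proposal is correct and is essentially the paper's own argument: the paper enumerates the finitely supported $\{0,1\}$-sequences (equivalently, $\mathrm{Fin}(\N)$), uses Fact~\ref{f:independence_over_compact_sets} with the two (infinite) sides of each ``column'' partition to choose $x_m$, and then witnesses a finite disjoint pair $(N,M)$ by the index of the characteristic function $1_N$ --- exactly your choice of the single witness $T$, up to swapping which side of the partition is sent above $b$. The re-indexing by a bijection $\phi\colon\N\to\mathrm{Fin}(\N)$ is only a cosmetic difference.
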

	\begin{proof}
		Let $a < b \in \R$ be the bounds of independence of $F$, and let $Z \subseteq \{0, 1\}^{\N}$ be the subset of finitely supported sequences.
		It is known that $Z$ is countable, so let $Z = \{\eta^{(n)}\}_{n \in \N}$ be an enumeration of $Z$.
		Write $\nu^{(m)} := \{\eta^{(n)}_{m}\}_{n \in \N} \in \{0, 1\}^{\N}$ for $m \in \N$.
		In virtue of Fact \ref{f:independence_over_compact_sets}, there exists some $x_{m} \in K$ such that 
		$$
		f_{n}(x_{m}) \in \begin{cases}
			(-\infty, a] & \nu^{(m)}_{n} = 1 \\
			[b, \infty) & \nu^{(m)}_{n} = 0 \\
		\end{cases}.
		$$
		We claim that $\{x_{m}\}_{m \in \N}$ is independent over $F$ with boundaries $a < b$.
		Indeed, suppose that $N, M \subseteq \N$ are finite and disjoint.
		We need to find $f \in F$ such that
		$$
		\forall k \in N, m \in M: f(x_{k}) \leq a \text{ and } f(x_{m}) \geq b.
		$$
		Consider the characteristic function $1_{N} \in Z$. 
		There exists some $n_{0} \in \N$ such that $\eta^{(n_{0})} = 1_{N}$.
		Note that for every $k \in N, m \in M$, we have $\nu_{n_{0}}^{(k)} := \eta_{k}^{(n_{0})} = 1_{N}(k) = 1$ and similarly, $\nu_{n_{0}}^{(m)} = 0$.
		Thus
		$$
		\forall k \in N, m \in M: f_{n_{0}}(x_{k}) \leq a \text{ and } f_{n_{0}}(x_{m}) \geq b.
		$$
	\end{proof}
	
	\begin{remark}
		The intuition for the previous proof comes from the case of the standard basis ${\{e_{n}\}_{n \in \N} \subseteq l^{1}}$ and the dual ball $B_{l^{\infty}}$ \cite[p.~211, exercise~1]{Diestel}.
		We can construct a sequence ${\{\varphi^{(n)}\}_{n \in \N} \subseteq B_{l^{\infty}}}$ such that for every choice $c_{1}, \dotsc, c_{N} \in \R$, there is at least one coordinate $k \in \N$ such that:
		$$
		  \forall 1 \leq n \leq N: \varphi^{(n)}(e_{k}) = \sign(c_{n}),
		$$
		and therefore
		$$
		\left\lVert \sum_{n=1}^{N} c_{n} \varphi^{(n)} \right\rVert_{\infty} \geq\left\lvert \sum_{n=1}^{N} c_{n} \varphi^{(n)}(e_{k}) \right\rvert = \left\lvert\sum_{i=1}^{m} c_{i} \sign c_{i} \right\rvert = \sum_{i=1}^{m} \lvert c_{i} \rvert.
		$$
		In particular, let $\{\nu^{(m)}\}_{m \in \N}$ be a lexicographic enumeration of all the finite sequences of words of $\{-1, 1\}$.
		We can define $\varphi_{n}(e_{m}) := \nu^{(m)}_{n}$.
%
	\end{remark}
%
%
	\begin{thm} \label{thm:tame_inversion}
		Let $V$ be a Banach space and let $M \subseteq V^{*}$ be a weak-star compact subset.
		If $M$ is tame as a bounded subset of $V^{*}$ (i.e., tame as a family of functions over $B_{V^{**}}$), then it is co-tame.
	\end{thm}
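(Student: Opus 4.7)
The plan is a proof by contradiction that converts a hypothetical failure of co-tameness into an independent sequence inside $\overline{M}^{w^*}$, after which the bipolarity axiom of the bornological class $\BTame$ is used to derive a contradiction with the tameness of $M$. The bridge between ``functions on $\overline{M}^{w^*}$'' and ``functions on $B_{V^{**}}$'' is carried out by Lemma~\ref{lemma:independent_sequence_inversion}, which inverts the roles of functions and domain.

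Suppose that $M$ is tame over $B_{V^{**}}$ but fails to be co-tame. Then there is a bounded subset $A \subseteq V$ such that the family $\{\hat{x} : x \in A\} \subseteq C(\overline{M}^{w^*})$ is not tame on the weak-star compact set $K := \overline{M}^{w^*}$. Extract an independent subsequence $\{\hat{x}_n\}$ with bounds $a < b$, where $x_n \in A$. Now apply Lemma~\ref{lemma:independent_sequence_inversion} with the compact space $K = \overline{M}^{w^*}$ and the independent family $F = \{\hat{x}_n\} \subseteq C(K)$. This yields a sequence $\{\varphi_n\} \subseteq K = \overline{M}^{w^*}$ which is independent as a family of real-valued functions on $\{x_n\}$: for every disjoint finite $P, Q \subseteq \mathbb{N}$ there is some $k$ with $\varphi_n(x_k) < a$ for $n \in P$ and $\varphi_n(x_k) > b$ for $n \in Q$.

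Because $\{x_n\} \subseteq A$ is norm-bounded in $V$, after a uniform rescaling the canonical embedding $J\colon V \hookrightarrow V^{**}$ sends $\{x_n\}$ inside $B_{V^{**}}$. Hence $\{\varphi_n\}$ is an independent sequence of weak-star continuous functions on $B_{V^{**}}$ with appropriately rescaled bounds. On the other hand, since $M \in \BTame_{B_{V^{**}}}$, the bipolarity axiom of the bornological class $\BTame$ (Definition~\ref{def:bornological_class}) yields $M^{\circ\circ} \in \BTame_{B_{V^{**}}}$, where the polar is taken in $C(B_{V^{**}})^*$. Identifying $V^{**}$ with the Dirac masses inside $C(B_{V^{**}})^*$, one checks that any weak-star limit in $V^*$ of a net in $M$ lies in the $\sigma(C(B_{V^{**}}), C(B_{V^{**}})^*)$-closed absolutely convex hull of $M$, so $\overline{M}^{w^*} \subseteq M^{\circ\circ}$. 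By hereditariness of tame families, $\overline{M}^{w^*}$ is then itself tame over $B_{V^{**}}$, which contradicts the independent sequence $\{\varphi_n\} \subseteq \overline{M}^{w^*}$ produced above.

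I expect the hardest step to be the containment $\overline{M}^{w^*} \subseteq M^{\circ\circ}$: the weak-star closure is formed in the coarse topology $\sigma(V^*, V)$ on $V^*$, while the bipolar lives in $C(B_{V^{**}})$ tested against the much larger space of Radon measures on $B_{V^{**}}$. Bridging this gap requires the Goldstine-type identification of $J(B_V)$ as weak-star dense in $B_{V^{**}}$ together with the Dirac-measure embedding $B_{V^{**}} \hookrightarrow C(B_{V^{**}})^*$, so that pointwise convergence on $V$ is refined to pointwise convergence on enough Radon measures to place $\overline{M}^{w^*}$ inside $M^{\circ\circ}$. Once this is done, the rest of the argument is essentially a single application of Lemma~\ref{lemma:independent_sequence_inversion} followed by a routine invocation of bipolarity.
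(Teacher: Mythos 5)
Your strategy coincides with the paper's up to and including the application of Lemma~\ref{lemma:independent_sequence_inversion}: negate co-tameness to obtain a sequence in $B_{V}$ independent over $M$, invert it to obtain a sequence of functionals independent over those vectors, and transport that independence into $B_{V^{**}}$ via the canonical embedding $J$. The divergence, and the genuine gap, is in your final step. The containment $\overline{M}^{w^{*}} \subseteq M^{\circ\circ}$, with the polar taken in $C(B_{V^{**}})^{*}$ as in the bipolarity axiom of Definition~\ref{def:bornological_class}, is false in general. By the Bipolar Theorem that bipolar is the $\sigma\bigl(C(B_{V^{**}}), C(B_{V^{**}})^{*}\bigr)$-closed absolutely convex hull of $M$, and since $\acx M$ is convex, Mazur's theorem collapses this weak closure to the \emph{norm} closure; as $V^{*}$ sits isometrically and norm-closed inside $C(B_{V^{**}})$, one gets $M^{\circ\circ} = \overline{\acx}^{\lVert\cdot\rVert_{V^{*}}} M$. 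This set need not contain weak-star limit points of $M$: for $V = C[0,1]$ and $M = \{\delta_{q} : q \in \Q \cap [0,1]\}$, any $\delta_{x}$ with $x$ irrational lies in $\overline{M}^{w^{*}}$ yet is at total-variation distance at least $1$ from $\acx M$. Your heuristic that Goldstine plus the Dirac embedding upgrades pointwise convergence on $V$ to convergence against all Radon measures on $B_{V^{**}}$ is precisely what fails (compare $e_{n} \to 0$ weak-star but not weakly in $l^{1} = c_{0}^{*}$). The bipolar that \emph{is} weak-star closed is the one formed in the dual pair $(V^{**}, V^{*})$, as used in Proposition~\ref{prop:co_dlp_vs_dlp} via Fact~\ref{f:young}; you appear to have conflated the two.

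The paper's own proof never needs this bridge: it takes the output of Lemma~\ref{lemma:independent_sequence_inversion} to be a sequence $A' \subseteq M$ itself, so the independent sequence over $J(A) \subseteq B_{V^{**}}$ already lies inside $M$ and contradicts the tameness of $M$ directly. You are right to observe that, read literally, the lemma only places the inverted sequence in the compact space $\overline{M}^{w^{*}}$; but the repair cannot go through $M^{\circ\circ}$. One would need either a refinement of the inversion lemma producing points in the dense subset $M$, or a separate argument that the weak-star (i.e.\ pointwise-on-$B_{V}$) closure of a family that is tame over $B_{V^{**}}$ remains tame there --- neither of which is delivered by the bipolarity axiom. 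As written, your last step does not close the argument.
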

	\begin{proof}
		By contradiction, assume that $M$ is not co-tame.
		We can therefore find a sequence ${A = \{x_{n}\}_{n \in \N}}$ in $B_{V}$ such that $A$ is independent over $M$.
		Applying Lemma \ref{lemma:independent_sequence_inversion}, we can find a sequence $S \subseteq M$ which is independent over $A$.
		Write $J\colon V \to V^{**}$ for the canonical embedding.
		It is easy to see that $S \subseteq M$ is also independent over $J(A) \subseteq B_{V^{**}}$.
		As a consequence, $M$ is not tame, a contradiction.
	\end{proof}
	Note that the converse of Theorem \ref{thm:tame_inversion} need not be true.
	Consider $V := c_{0}$ and $M := B_{l^{1}}$ (recall that $c_{0}^{*} = l^{1}$).
	It is easy to see that $M$ is co-tame since $V$ itself is Rosenthal (in fact Asplund having a separable dual), but $M$ is clearly not tame.
	The following example shows that it needn't be the case for $L^{\infty}$ spaces.
	
	\begin{ex} \label{ex:cotame_non_tame_in_l_infinity}
		There exists a bounded sequence $M$ in $l^{\infty}$ which is:
		\ben
		\item weak-star compact;
		\item independent over the standard basis $\{e_{n}\}_{n \in \N}$;
		\item co-tame.
		\een				
		Namely, $M := \{0\} \cap \{\{\alpha^{(m)}_{n} \}_{n \in \N}\}_{m \in \N} \subseteq l^{\infty}$ and:
		$$
		\alpha^{(m)}_{n} := \text{ the coefficient of $2^{m}$ in the dyadic expansion of $n$}
		$$	
		for $0 \leq n, m \in \N$.	
		\begin{center}
			\begin{tabular}{|l|l|l|l|l|}
				$\alpha^{(m)}_{n}$ & $m=0$ & $m=1$ & $m=2$ & $\dots$ \\
				\hline
				$n=0$                & 0   & 0   & 0 & $\dots$  \\
				$n=1$                & 1   & 0   & 0 & $\dots$   \\
				$n=2$                & 0   & 1   & 0 & $\dots$   \\
				$n=3$                & 1   & 1   & 0 & $\dots$   \\
				$n=4$                & 0   & 0   & 1 & $\dots$   \\
				$n=5$                & 1   & 0   & 1 & $\dots$   \\
				$n=6$                & 0   & 1   & 1 & $\dots$   \\
				$n=7$                & 1   & 1   & 1 & $\dots$   \\
				$\vdots$  		   & $\vdots$ & $\vdots$ & $\vdots$ & $\ddots$ 
			\end{tabular}
		\end{center}
	\end{ex}
	\begin{proof}~\\
		\ben
		\item It is easy to see that $M$ is bounded and it is known that the weak-star topology coincides with the topology of coordinatewise convergence on bounded subsets of $l^{\infty}$. 
		Moreover, $\lim\limits_{m \in \N} \alpha_{n}^{(m)} = 0$ for every $n \in \N$. Therefore
		$$
		  \lim_{m \in \N} \alpha^{(m)} = 0 \in M.
		$$
		In other words, $M$ is a weak-star converging sequence together with its limit, hence a weak-star compact subset.
		\item Indeed, suppose that $N, M \subseteq \N$ are disjoint and finite. 
		We will find $n_{0} \in \N$ such that
		$$
		  \langle \alpha^{(m)}, e_{n_{0}}\rangle = \begin{cases}
		  	1 & m \in N\\
		  	0 & m \in M
		  \end{cases}.
		$$
		Write
		$$
		n_{0} := \sum_{k \in N} 2^{k}.
		$$
		Now, by definition
		\begin{align*}
			\langle \alpha^{(m)}, e_{n_{0}}\rangle & =
			\alpha^{(m)}_{n_{0}} =
			\begin{cases}
				1 & m \in N \\
				0 & \text{else}
			\end{cases},
		\end{align*}
		as required.
		\item By contradiction, assume that $M$ is not co-tame.
		As a consequence, there is some subset $A \subseteq l^{1}$ which is independent over $M$.
		However, since $M$ was shown to be weak-star compact, Fact \ref{f:independence_over_compact_sets} would imply that $M$ has cardinality equal or larger than the continuum, which is clearly false as $M$ is countable.
		\een
	\end{proof}

	\section{Small Functions and Functionals}\label{section:small_functions_and_functionals}

	\subsection{Small Functionals on Banach Algebras}
	We use bornologies to generalize WAP, Asplund and tame functionals (defined in Subsection \ref{subsection:classes_of_functionals}).

	\begin{defin} \label{defin:B_small_functional}
		Let $\A$ be a Banach algebra and $\mathcal{B}$ be a vector bornology on $\ruc(\A)$.
		A functional $\varphi \in \ruc(\A)$ is said to be \emph{right $\mathcal{B}$-small} if $\mathfrak{L}_{\varphi}(B_{\A}) \in \mathcal{B}$.
		Recall that $\mathfrak{L}_{\varphi}(B_{\A}) = \varphi \circledast B_{\A}\subseteq \A^{*}$.
		We will also write $\absmall[\mathcal{B}]{\ruc(\A^{*})}$ for the set of all right $\mathcal{B}$-small functionals.
		Here ``a" stands for ``algebra".
	\end{defin}
	\begin{remark}
		Analogous definitions could be made for the left action.
	\end{remark}
	We will write $\absmall[\BDLP]{\RUC(G)}, \absmall[\BNP]{\RUC(G)}, \absmall[\BTame]{\RUC(G)}$ for the right small functionals induced by the weakly relatively compact, Asplund and tame bornologies on $L^{1}(G)$ (Fact \ref{fact:bornologies}).
	\begin{remark} \label{remark:equivalence_functional_tameness}
		A functional is WAP in the sense of \cite{Ulger}, if and only if it is right $\BDLP$-small.
		A functional is tame/Asplund in the sense of \cite{TameFunc}, if and only if it is right $\BTame$-small/$\BNP$-small.
		In other words:
		$$
		\wapfunctional(L^{1}(G)) = \absmall[\BDLP]{\RUC(G)},
		$$
		$$
		\aspfunctional(L^{1}(G)) = \absmall[\BNP]{\RUC(G)},
		$$
		$$
		\tamefunctional(L^{1}(G)) = \absmall[\BTame]{\RUC(G)}.
		$$
	\end{remark}
%
	
%

	\begin{prop}
		The set $\absmall[\mathcal{B}]{\ruc(\A^{*})}$ of all right $\mathcal{B}$-small functionals on $\A$ is a linear subspace of $\A^{*}$.
	\end{prop}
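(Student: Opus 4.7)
The plan is to check directly the three subspace axioms, exploiting two simple facts: that $\ruc(\A) := \overline{\spn}(\A^{*}\cdot \A)$ is by construction a linear subspace of $\A^{*}$, and that the assignment $\varphi \mapsto \mathfrak{L}_{\varphi}$ is itself linear in $\varphi$. So once a candidate $\alpha\varphi + \beta\psi$ is known to land in $\ruc(\A)$ (immediate from linearity of $\ruc(\A)$), the question collapses to a bornological check on the image sets $\mathfrak{L}_{\alpha\varphi+\beta\psi}(B_{\A}) \subseteq \A^{*}$.

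First I would dispose of the zero functional: $\mathfrak{L}_{0}(B_{\A}) = \{0\}$ is finite, and every (vector) bornology contains finite sets, hence $0 \in \absmall[\mathcal{B}]{\A^{*}}$. Next, for $\alpha \in \R$ and $\varphi \in \absmall[\mathcal{B}]{\A^{*}}$, the identity
$$
\mathfrak{L}_{\alpha\varphi}(B_{\A}) \;=\; (\alpha\varphi)\cdot B_{\A} \;=\; \alpha\bigl(\varphi\cdot B_{\A}\bigr)
$$
together with the scalar-multiplication axiom of a vector bornology gives $\alpha\varphi \in \absmall[\mathcal{B}]{\A^{*}}$.

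For additivity, the key observation is the inclusion
$$
\mathfrak{L}_{\varphi+\psi}(B_{\A}) \;=\; (\varphi+\psi)\cdot B_{\A} \;\subseteq\; \varphi\cdot B_{\A} + \psi\cdot B_{\A},
$$
which follows from linearity of the Arens action in the functional variable. Since $\mathcal{B}$ is a vector bornology on $\A^{*}$, it is closed under finite Minkowski sums, so the right-hand side belongs to $\mathcal{B}$; heredity under inclusion then yields $(\varphi+\psi)\cdot B_{\A} \in \mathcal{B}$, and hence $\varphi+\psi \in \absmall[\mathcal{B}]{\A^{*}}$.

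There is no real obstacle here: the argument is a mechanical verification that pushes the linearity of $\varphi \mapsto \mathfrak{L}_{\varphi}$ through the vector-bornology axioms, with the only non-trivial ingredient being the set-theoretic inclusion above (which replaces the equality one would want but does not need, thanks to heredity). The same scheme would work verbatim for the left version $\mathfrak{R}_{\varphi}$ and for the $\luc(\A)$-based variant.
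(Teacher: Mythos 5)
Your proposal is correct and follows essentially the same route as the paper: both arguments reduce to the linearity of $\varphi \mapsto \mathfrak{L}_{\varphi}$, the inclusion $\mathfrak{L}_{\varphi+\alpha\psi}(B_{\A}) \subseteq \mathfrak{L}_{\varphi}(B_{\A}) + \alpha\,\mathfrak{L}_{\psi}(B_{\A})$, and then the vector-bornology axioms (closure under sums and scalar multiples) together with heredity under inclusion. The only cosmetic difference is that the paper handles addition and scaling in a single step via $\varphi + \alpha\psi$, while you verify the axioms separately and additionally note the (harmless, and slightly more careful) point that $\alpha\varphi+\beta\psi$ stays in $\ruc(\A)$.
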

	\begin{proof}
		First, suppose that $\varphi, \psi \in \absmall[\mathcal{B}]{\ruc(\A^{*})}$ and $\alpha \in \R$.
		It is easy to see that:
		\begin{align*}
			\mathfrak{L}_{\varphi + \alpha \psi}(B_{\A}) & =
			(\mathfrak{L}_{\varphi} + \alpha \mathfrak{L}_{\psi})(B_{\A}) \\
			& \subseteq \mathfrak{L}_{\varphi}(B_{\A}) + \alpha \mathfrak{L}_{\psi}(B_{\A}).
		\end{align*}
		By definition, $\mathfrak{L}_{\varphi}(B_{\A})$ and $\mathfrak{L}_{\psi}(B_{\A})$ are right $\mathcal{B}$-small.
		Since $\mathcal{B}$ is a vector bornology, their linear combination is also right $\mathcal{B}$-small, as required.
	\end{proof}

	
	\begin{ex}
		Consider the Banach algebra $\A := l^{1}$ together with pointwise multiplication.
		Then every functional $\varphi \in \A^{*}$ is right $\BDLP$-small.
	\end{ex}
	\begin{proof}
		Let $\varphi \in l^{\infty} = \A^{*}$.
		We will denote multiplication in $\A$ by $a \star b$ for $a, b \in l^{1}$, and use $\circledast$ for the module operation on $l^{\infty}$.
		We will show that $\varphi \circledast B_{l^{1}}$ is weakly relatively compact.		
		It is easy to see that for every $a\in l^{1}$ we have:
		$$
		  \varphi \circledast a = \varphi \star a,
		$$
		where in the right hand side we consider element-wise multiplication.
		As a consequence, 
		$$
		  \varphi \circledast B_{l^{1}} \subseteq \lVert \varphi \rVert_{\infty} B_{l^{1}} \subseteq c_{0} \cap \lVert \varphi \rVert_{\infty} B_{l^{\infty}}.
		$$
		Therefore, it is enough to show that $B_{l^{1}} \subseteq l^{\infty}$ is weakly compact.
		Indeed, $B_{l^{1}}$ is closed in the topology of  coordinatewise convergence, and therefore also weak-star closed in $l^{\infty}$.
		In virtue of the Banach–Alaoglu theorem, $B_{l^{1}}$ is weak-star compact.
		In other words, it is compact with respect to the weak topology induced from $l^{1} = c_{0}^{*}$.
		However, $B_{l^{1}} \subseteq c_{0}$, so this topology coincides with the weak topology of $c_{0} \subseteq l^{\infty}$, proving the desired result.
	\end{proof}
	
	\begin{q}
		Let $X$ be a Banach space and consider the Banach algebra $\A = \mathcal{L}(X, X)$ of bounded linear maps with the operator norm.
		Every $x \in X$ and $\varphi \in X^{*}$ induces a functional $\Phi_{x, \varphi} \in \A^{*}$ defined by:
		$$
		  \Phi_{x, \varphi}(T) := \varphi(T(x)).
		$$
		What are the combinations $(x, \varphi) \in X \times X^{*}$ such that $\Phi_{x, \varphi}$ is WAP/Asplund/tame (i.e., right $\BDLP$-small/$\BNP$-small/$\BTame$-small)?
	\end{q}
	\begin{remark}
		Note that $\Phi_{x, \varphi} = \Phi_{x, \varphi} \circledast id_{\A}$, so $\Phi_{x, \varphi}$ belongs to $\ruc(\A)$.
	\end{remark}

	\begin{ex}
		Consider $\A = \mathcal{L}(l^{1}, l^{1})$.
		Then $\Phi_{x, \varphi}$ is not tame for every $0 \neq x \in l^{1}, 0 \neq \varphi \in l^{\infty}$.
	\end{ex}
	\begin{proof}
		We need to show that $\Phi_{x, \varphi} \circledast B_{\A}$ has an $l^{1}$-sequence.		
		Using Lemma \ref{lemma:independent_sequence_inversion}, find 
		$$
		\{\psi_{n}\}_{n \in \N} \subseteq \{0, 1\}^{\N} \subseteq B_{l^{\infty}}
		$$ 
		which are independent over $\{e_{n}\}_{n \in \N} \subseteq l^{1}$ with bounds $0 < 1$.
		Since $0 \neq \varphi$, we can find $y_{0} \in l^{1}$ such that $\varphi(y_{0}) = 1$.
		Now, define $S_{n} \in B_{\A}$ via:
		$$
		  S_{n}(x) := \frac{1}{\lVert y_{0} \rVert}\psi_{n}(x)y_{0}.
		$$
		Using the Hahn-Banach theorem, we can find $T_{m} \in \A$ such that:
		$$
		  T_{m}(x) = \lVert x \rVert e_{m}.
		$$  
		
		We claim that $\{\Phi_{x, \varphi} \circledast S_{n} \}_{n \in \N}$ is independent over $\{T_{m}\}_{m \in \N} \subseteq B_{\A} \subseteq B_{\A^{**}}$.
		Indeed, by the construction of $\{\psi_{n}\}_{n \in \N}$, for every finite, disjoint $M, P\subseteq \N$ we have some $m(M, P) \in \N$ such that
		$$
		  \psi_{n}(e_{m(M, P)}) \in \begin{cases}
		  	[1, \infty) & n \in M \\
		  	(-\infty, 0] & n \in P
		  \end{cases}.
		$$
		Therefore:
		\begin{align*}
			(\Phi_{x, \varphi} \circledast S_{n})(T_{m(M, P)}) & :=
			\Phi_{x, \varphi}(S_{n} \circ T_{m(M, P)}) \\
			& = \varphi((S_{n} \circ T_{m(M, P)})(x)) \\
			& = \varphi(S_{n}( T_{m(M, P)}(x))) \\
			& = \varphi(S_{n}(\lVert x \rVert e_{m(M, P)}) \\
			& = \lVert x \rVert\varphi\left(\frac{1}{\lVert y_{0} \rVert}\psi_{n}(e_{m(M, P)})y_{0}\right) \\	
			& = \frac{\lVert x \rVert}{\lVert y_{0} \rVert}\psi_{n}(e_{m(M, P)}) \\
			& \in \begin{cases}
				\Big[\frac{\lVert x \rVert}{\lVert y_{0} \rVert}, \infty\Big) & n \in M \\
				(-\infty, 0] & n \in P
			\end{cases}.
		\end{align*}
	\end{proof}
	\begin{q}
		Is there any non-trivial tame functional on the Banach algebra $\mathcal{L}(l^{1}, l^{1})$?
	\end{q}

%
%
%
%
%

	\subsection{Small Functions on Groups}
	Let $G$ be a locally compact group.
	\begin{defin} \label{defin:B_small_function}
		Let $\mathcal{B}$ be a vector bornology on $\RUC(G)$.
		A function $\varphi \in \RUC(G)$ is said to be \emph{right $\mathcal{B}$-small as a function} if $\varphi \cdot G \in \mathcal{B}$.
		We will write $\fbsmall[\mathcal{B}]{\RUC(G)}$ for the set of all right $\mathcal{B}$-small functions (here ``g" stands for ``group").
	\end{defin}
	\begin{remark}
		Analogous definitions could be made for the left action.
	\end{remark}
	\begin{remark} \label{remark:tame_definition_is_the_same}
		In virtue of Definitions \ref{defin:ulger_wap_function} and \ref{defin:asplund_function}, and Proposition \ref{fact:tame_as_banach_subset_equivalence}, we have:
		$$
		  \wap(G) = \fbsmall[\BDLP]{\RUC(G)},
		$$
		$$
		\asp(G) = \fbsmall[\BNP]{\RUC(G)}.
		$$
		$$
		  \tame(G) = \fbsmall[\BTame]{\RUC(G)}.
		$$
	\end{remark}

	\begin{remark} \label{rem:left_wap_is_right_wap}
		In the case of WAP functions, it is known that a function is left WAP if and only if it is right WAP \cite[Cor~1.12]{burckel}.
	\end{remark}
	\section{Relation Between Functions and Functionals}\label{section:main}
	Let $G$ be a locally compact group.
	It is known that $\wap(G) = \wapfunctional(L^{1}(G))$ \cite[Thm.~4]{Ulger}.
	In \cite[Question~3.6]{TameFunc}, Megrelishvili asked if $\tame(G) = \tamefunctional(L^{1}(G))$.
	In this section we will show that it is true.
	


	The following is a generalization of a similar result for abelian groups which can be found in \cite[p.~236]{Kitchen1966}.
	
	\begin{prop} \label{prop:orbit_inside_ball_action}
		For every $\varphi \in \RUC(G)$ (resp. $\varphi \in \LUC(G)$), we have $\varphi\cdot G \subseteq \overline{\varphi \circledast B_{L^{1}(G)}}$ (resp. $G \cdot \varphi \subseteq \overline{B_{L^{1}(G)} \circledast \varphi}$).
	\end{prop}
	\begin{proof}
		We will only prove the case of $\RUC(G)$, the other one is similar.
		First, by Fact \ref{fact:L1G_has_approximate_identity} we can find an approximate identity $\{e_{\lambda}\}_{\lambda \in \Lambda} \subseteq B_{L^{1}(G)}$.
		Next, applying Fact \ref{fact:RUC_is_essential} and Fact \ref{fact:essential_is_module_identity} we conclude that
		$$
		\forall \psi \in \RUC(G): \lim\limits_{\lambda \in \Lambda} \psi \circledast e_{\lambda} = \psi.
		$$
		As a consequence, $\varphi \in \overline{ \{\varphi \circledast e_{\lambda} \}_{\lambda \in \Lambda}} \subseteq \overline{\varphi \circledast B_{L^{1}(G)} }$.
		Finally, in virtue of Remark \ref{remark:action_respect_norm}, for every $g \in G$ we have $B_{L^{1}(G)} \odot g = B_{L^{1}(G)}$, and therefore:
		$$
		{\varphi \cdot g \in 
		\overline{\varphi \circledast B_{L^{1}(G)}} \cdot g = 
		\overline{(\varphi \circledast B_{L^{1}(G)}) \cdot g} \underset{\ref{prop:associativity_of_locally_compact_groups:functional_algebra_group}}{=}
		\overline{\varphi \circledast (B_{L^{1}(G)} \odot g)} = 
		\overline{\varphi \circledast B_{L^{1}(G)}}}.
		$$
		The first equality is a consequence of the translation map $\varphi \mapsto \varphi \cdot g$ being an isometry.		
		This is true for every $g \in G$, proving the desired result.
	\end{proof}

	\begin{thm} \label{thm:acx_G_BL1_G_equivalence}
		For a locally compact group $G$, 
		\begin{align*}
			\forall \varphi \in \LUC(G): \overline{\acx} (G \cdot \varphi) & = \overline{B_{L^{1}(G)} \circledast \varphi}\\
			\forall \varphi \in \RUC(G): \overline{\acx} (\varphi \cdot G) & = \overline{\varphi \circledast B_{L^{1}(G)}}.
		\end{align*}
	\end{thm}
	\begin{proof}
		One inclusion was proven in Proposition \ref{prop:orbit_inside_ball_action}.
		All that is left is to show that:
		\begin{align*}
			\forall \varphi \in \LUC(G): \overline{\acx} (G \cdot \varphi) & \supseteq B_{L^{1}(G)} \circledast \varphi\\
			\forall \varphi \in \RUC(G): \overline{\acx} (\varphi \cdot G) & \supseteq \varphi \circledast B_{L^{1}(G)}.
		\end{align*}
		We will only prove the $\RUC$ case, the other is very similar.
		Indeed, suppose that $\varphi \in \RUC(G)$, $a \in B_{L^{1}(G)}$ and $\eps > 0$.
		We need to find some $\{\alpha_{k} \}_{k=1}^{n}\subseteq \R$, and $\{g_{k}\}_{k = 1}^{n} \subseteq G$ such that:
		$$
		  \left\lVert \varphi \circledast a - \sum_{k=1}^{n}\alpha_{k} (\varphi \cdot g_{k}) \right\rVert < \eps,
		$$
		and $\sum\limits_{k=1}^{n} \lvert \alpha_{k} \rvert \leq 1$.
		In virtue of Fact \ref{fact:orbits_are_UEB}, we conclude that $G \cdot \varphi$ is right UEB.
		Now, applying Fact \ref{fact:acx_G_is_UEB_dense}, we can find $\{\alpha_{k}\}_{k=1}^{n}\subseteq \R$ and $\{g_{k}\}_{k=1}^{n}$ such that:
		$$
		  \forall \psi \in G \cdot \varphi: \lvert\langle \psi, a - \mu \rangle\rvert \leq \eps
		$$
		and $\sum\limits_{k=1}^{n} \lvert \alpha_{k} \rvert \leq 1$, where $\mu = \sum\limits_{k=1}^{n} \alpha_{k} \Psi(g_{k}) \in \acx \Psi(G)$.
		In other words:
		$$
		  \forall g \in G: \lvert \langle g\cdot \varphi, a - \mu\rangle \rvert \leq \eps.
		$$
		Using Lemma \ref{lemma:G_action_on_L_inf_duality} we recall that $\langle g\cdot \varphi, a \rangle = (\varphi \circledast a)(g)$.
		Moreover:
		\begin{align*}
			\langle g\cdot \varphi, \mu\rangle & = \sum_{k=1}^{n} \alpha_{k}\langle g\cdot \varphi, \Psi(g_{k})\rangle\\
			& \underset{\ref{lemma:duality_of_measures_embedding}}{=} \sum_{k=1}^{n} \alpha_{k}(g\cdot \varphi)(g_{k})\\
			& = \sum_{k=1}^{n} \alpha_{k}\varphi(g_{k} g)\\
			& = \sum_{k=1}^{n} \alpha_{k}(\varphi \cdot g_{k} )(g).\\
		\end{align*}
		Thus:
		\begin{align*}
		\left\lVert \varphi \circledast a - \sum_{k=1}^{n}\alpha_{k} (\varphi \cdot g_{k}) \right\rVert & = 
		\sup_{g \in G} \left\lvert (\varphi \circledast a)(g) - \sum_{k=1}^{n}\alpha_{k} (\varphi \cdot g_{k})(g) \right\rvert\\
		& = \sup_{g \in G} \left\lvert \langle g \cdot \varphi, a\rangle - \langle g \cdot \varphi, \mu\rangle \right\rvert\\
		& = \sup_{g \in G} \lvert \langle g\cdot \varphi, a - \mu\rangle \rvert \leq \eps.
		\end{align*}
	
	\end{proof}

	\begin{thm} \label{thm:function_functional_equivalence}
		For every norm-saturated, convex vector bornology $\mathcal{B}$ we have:
		$$
		\absmall[\mathcal{B}]{\RUC(G)} = \fbsmall[\mathcal{B}]{\RUC(G)}.
		$$
		In particular:
		\begin{align*}
			\tamefunctional(L^{1}(G)) & = \tame(G),\\
			\aspfunctional(L^{1}(G)) & = \asp(G),\\
			\wapfunctional(L^{1}(G)) & = \wap(G).
		\end{align*}
	\end{thm}
	\begin{proof}
		First, note that by definition, $\varphi \in \absmall[\mathcal{B}]{\RUC(G)}$ if and only if $\varphi \circledast B_{L^{1}(G)} \in \mathcal{B}$.
		Since $\mathcal{B}$ is norm-saturated, this is equivalent to $\overline{\varphi \circledast B_{L^{1}(G)}}$ also belonging to $\mathcal{B}$.
		In virtue of Theorem \ref{thm:acx_G_BL1_G_equivalence}, this is the same as having $\overline{\acx}(\varphi\cdot G) \in \mathcal{B}$.
		Leveraging $\mathcal{B}$ being norm-saturated and convex, we can alternatively write $\varphi\cdot G \in \mathcal{B}$.
		By definition, $\varphi \in \fbsmall[\mathcal{B}]{\RUC(G)}$.
		This shows the equivalence of the two definitions.
		
		The implications for tame, Asplund and WAP functions are a consequence of Remarks \ref{remark:equivalence_functional_tameness}, \ref{remark:tame_definition_is_the_same} and Fact \ref{fact:bornologies}.
	\end{proof}
	\begin{remark}
		The previous theorem is also true when considering \emph{left} definitions rather than right ones.
	\end{remark}
	\begin{ex}
		The following are examples of interesting functions which are Asplund/tame on some locally compact group $G$.
		As a consequence of the previous theorem, they are also Asplund/tame as functionals over $L^{1}(G)$.
		\ben
		\item The function $\arctan \in L^{\infty}(\R)$ is Asplund but not WAP \cite[Example~7.19.3]{Me-Frag04}.
		\item The Fibonacci bisequence on $\Z$ is tame but not Asplund \cite[Example~6.1.2]{GM-MTame}.
		\een
	\end{ex}

	\section{Open Questions} \label{section:open_questions}

	\begin{q}  \label{question:non_ruc_tame_functionals}
		We constrained ourselves to $\RUC$ (resp. $\LUC$) functionals in Definition \ref{defin:B_small_functional}.
		However, it is reasonable to make the same definition for general elements of $L^{\infty}(G)$.
		For example, we can say that a (not necessarily right uniformly continuous) functional $\varphi \in L^{\infty}(G)$ is \emph{generalized tame} if $\varphi \circledast B_{L^{1}(G)}$ is a tame subset of $L^{\infty}(G)$.
		
		What can be said about such functionals?
		Are there any interesting examples which don't belong to $\RUC(G)$?
	\end{q}

	\begin{q}
		As mentioned in Remark \ref{rem:left_wap_is_right_wap}, every left WAP function is also a right WAP function.
		Is it also the case for Asplund and tame bornologies?
		
		In other words, if $\varphi \cdot G$ is an Asplund/tame subset, does it imply that $G \cdot \varphi$ is an Asplund/tame subset.
	\end{q}
	\begin{q}
		It is known that a functional $\varphi \in \A^{*}$ is WAP if and only if 
		$$
		  \forall \Theta, \varOmega \in \A^{**}: (\Theta \Box\varOmega)(\varphi) = (\Theta \Diamond \varOmega)(\varphi)
		$$
		where $\Box$ and $\Diamond$ are the first and second Arens products.
		
		Is there a similar algebraic characterization for tame functionals?
	\end{q}

%
	
	\section{Appendix}
	In this section, we give some proofs that were omitted in the main text.
	\allowdisplaybreaks
	\printProofs
	
	\bibliography{mybib}{}

\begin{thebibliography}{10}

\bibitem{Yassin}
J.~Al-Yassin.
\newblock Representations of {B}anach algebras subordinate to topologically
  introverted spaces.
\newblock {\em Windsor Electronic Theses and Dissertations}, 2014.

\bibitem{Arens}
R.~Arens.
\newblock Operations induced in function classes.
\newblock {\em Monatshefte für Mathematik}, 55:1--19, 1951.

\bibitem{burckel}
R.~B. Burckel.
\newblock {\em Weakly almost periodic functions on semigroups}.
\newblock Notes on Mathematics and its Applications. Gordon and Breach, 1970.

\bibitem{DFJP}
W.~J. Davis, T.~Figiel, W.~B. Johnson, and A.~Pe\l{}czy\'{n}ski.
\newblock Factoring weakly compact operators.
\newblock {\em Journal of Functional Analysis}, 17(3):311--327, 1974.

\bibitem{Diestel}
J.~Diestel.
\newblock {\em Sequences and series in {B}anach spaces}.
\newblock Graduate Texts in Mathematics. Springer New York, 1984.

\bibitem{ApproximatedIdentities}
R.~S. Doran and J.~Wichmann.
\newblock {\em Approximate identities and factorization in {B}anach modules}.
\newblock Springer Berlin, Heidelberg, 1979.

\bibitem{Fabian1997}
M.~Fabian.
\newblock {\em G{\^a}teaux differentiability of convex functions and topology.
  {Weak} {Asplund} spaces}.
\newblock Canadian Math. Soc. Series of Monographs and Advanced Texts. New
  York, NY: Wiley, 1997.

\bibitem{Filali}
M.~Filali, M.~Neufang, and M.~Sangani Monfared.
\newblock Representations of {B}anach algebras subordinate to topologically
  introverted spaces.
\newblock {\em Trans. Amer. Math. Soc.}, 367:8033--8050, 2015.

\bibitem{GM1}
E.~Glasner and M.~Megrelishvili.
\newblock Linear representations of hereditarily non-sensitive dynamical
  systems.
\newblock {\em Colloq. Math}, 104:223--283, 2006.

\bibitem{GM-fpt}
E.~Glasner and M.~Megrelishvili.
\newblock On fixed point theorems and nonsensitivity.
\newblock {\em Israel J. Math.}, 190:289--305, 2012.

\bibitem{GM-rose}
E.~Glasner and M.~Megrelishvili.
\newblock Representations of dynamical systems on {B}anach spaces not
  containing $l_1$.
\newblock {\em Trans. Amer. Math. Soc.}, 364(12):6395--6424, 2012.

\bibitem{GM-tame}
E.~Glasner and M.~Megrelishvili.
\newblock Eventual nonsensitivity and tame dynamical systems.
\newblock {\em arXiv e-prints}, 2014.

\bibitem{GM-survey}
E.~Glasner and M.~Megrelishvili.
\newblock Representations of dynamical systems on {B}anach spaces.
\newblock In {\em Recent Progress in General Topology III}, pages 399--470.
  Atlantis Press, Paris, 2014.

\bibitem{GM-MTame}
E.~Glasner and M.~Megrelishvili.
\newblock {\em More on tame dynamical systems}, 2018.
\newblock In: Lecture Notes in Math., vol. 2213, Ergodic Theory and Dynamical
  Systems in their Interactions with Arithmetics and Combinatorics, Eds.: S.
  Ferenczi, J. Kulaga-Przymus, M. Lemanczyk, Springer, 2018, pp. 351–392. See
  also arXiv:2302.09571, 2023.

\bibitem{AbstractHarmonic}
E.~Hewitt and K.~A. Ross.
\newblock {\em Abstract harmonic analysis}, volume~1.
\newblock Springer, New York, NY, 1979.

\bibitem{AbstractHarmonic2}
E.~Hewitt and K.~A. Ross.
\newblock {\em Abstract harmonic analysis}, volume~2.
\newblock Springer, New York, NY, 1994.

\bibitem{bornologies}
H.~Hogbe-Nlend.
\newblock {\em Bornologies and functional analysis: introductory course on the
  theory of duality topology-bornology and its use in functional analysis}.
\newblock Elsevier Science, 1977.

\bibitem{JOPV}
J.~E. Jayne, J.~Orihuela, A.~J. Pallares, and G.~Vera.
\newblock $\sigma$-fragmentability of multivalued maps and selection theorems.
\newblock {\em Journal of Functional Analysis}, 117:243--273, 1993.

\bibitem{JR}
J.~E. Jayne and C.~A. Rogers.
\newblock Borel selectors for upper semi-continuous set-valued maps.
\newblock {\em Acta Math.}, 155:41--79, 1985.

\bibitem{kelley}
J.~L. Kelley.
\newblock {\em General topology}.
\newblock Graduate Texts in Mathematics. Springer New York, 1975.

\bibitem{KellyNamioka}
J.~L. Kelley and I.~Namioka.
\newblock {\em Linear topological spaces}.
\newblock Graduate Texts in Mathematics. Springer Berlin, Heidelberg, 1963.

\bibitem{Kitchen1966}
J.~W. Kitchen.
\newblock Normed modules and almost periodicity.
\newblock {\em Monatshefte für Mathematik}, 70:233--243, 1966.

\bibitem{Ko}
A.~K\"{o}hler.
\newblock Enveloping semigroups for flows.
\newblock {\em Proceedings of the Royal Irish Academy}, 95A:179--191, 1995.

\bibitem{TameLCS}
M.~Komisarchik and M.~Megrelishvili.
\newblock Tameness and {R}osenthal type locally convex spaces.
\newblock {\em Revista de la Real Academia de Ciencias Exactas, Físicas y
  Naturales. Serie A. Matemáticas}, 117(3):1--53, 2022.

\bibitem{LauFunctionals}
A.~T. Lau.
\newblock Uniformly continuous functionals on {B}anach algebras.
\newblock {\em Colloq. Math.}, 51(1):195--205, 1987.

\bibitem{MegrelBook}
M.~Megrelishvili.
\newblock {\em Topological group actions and {B}anach representations}.
\newblock unpublished book, available at author's homepage.

\bibitem{Me-fr98}
M.~Megrelishvili.
\newblock Fragmentability and continuity of semigroup actions.
\newblock {\em Semigroup Forum}, 57:101--126, 1998.

\bibitem{Me-Frag04}
M.~Megrelishvili.
\newblock Fragmentability and representations of flows.
\newblock {\em Topology Proc.}, 27:497--544, 2004.

\bibitem{TameFunc}
M.~Megrelishvili.
\newblock Tame functionals on {B}anach algebras.
\newblock In M.~Filali, editor, {\em Banach Algebras and Applications}, pages
  213--226. De Gruyter, Berlin, Boston, 2020.

\bibitem{NP}
I.~Namioka and R.~R. Phelps.
\newblock Banach spaces which are {A}splund spaces.
\newblock {\em Duke Math. J.}, 42(4):735--750, 1975.

\bibitem{JanUEB}
M.~Neufang, J.~Pachl, and P.~Salmi.
\newblock Uniform equicontinuity, multiplier topology and continuity of
  convolution.
\newblock {\em Archiv der Mathematik}, 104:367--376, 2015.

\bibitem{pachl2012uniform}
J.~Pachl.
\newblock {\em Uniform spaces and measures}, volume~30 of {\em Fields Institute
  Monographs}.
\newblock Springer New York, 2012.

\bibitem{Ros0}
H.~P. Rosenthal.
\newblock A characterization of {B}anach spaces containing $l^1$.
\newblock {\em Proc. National Acad. Sci.}, 71(6):2411--2413, 1974.

\bibitem{Tal}
M.~Talagrand.
\newblock Pettis integral and measure theory.
\newblock {\em Mem. Amer. Math. Soc.}, 51(307), 1984.

\bibitem{Ulger}
A.~\"{U}lger.
\newblock Continuity of weakly almost periodic functionals on {$L_{1}(G)$}.
\newblock {\em Quarterly J. Math.}, 37(4):495--497, 1986.

\end{thebibliography}
	\bibliographystyle{plain}
\end{document}